\newcommand\numberthis{\addtocounter{equation}{1}\tag{\theequation}}
\DeclareMathOperator{\ur}{ur}
\DeclareMathOperator{\Hom}{Hom}
\DeclareMathOperator{\Gal}{Gal}
\DeclareMathOperator{\rank}{rank}
\DeclareMathOperator{\ord}{ord}
\DeclareMathOperator{\trace}{tr}
\DeclareMathOperator{\corank}{corank}
\DeclareMathOperator{\Sel}{Sel}
\DeclareMathOperator{\GL}{GL}
\DeclareMathOperator{\cyc}{cyc}
\DeclareMathOperator{\cont}{cont}
\DeclareMathOperator{\Frob}{Frob}
\DeclareMathOperator{\image}{Im}
\newcommand{\sE}{\mathscr E}
\newcommand{\sD}{\mathscr D}
\newcommand{\sN}{\mathscr N}
\newcommand{\Q}{\mathbb Q}
\newcommand{\F}{\mathbb F}
\newcommand{\cF}{\mathcal F}
\newcommand{\cP}{\mathcal P}
\newcommand{\cQ}{\mathcal Q}
\newcommand{\ZZ}{\mathbb Z}
\newcommand{\cO}{\mathcal O}
\newcommand{\fp}{\mathfrak{p}}
\newcommand{\Zp}{\ZZ_p}
\newcommand*{\EC}{\mathsf{E}}
\newcommand{\pdiv}{\mid\!\mid}
\DeclareSymbolFont{cyrletters}{OT2}{wncyr}{m}{n}
\DeclareMathSymbol{\Sha}{\mathalpha}{cyrletters}{"58}
\DeclareMathSymbol{\Zh}{\mathalpha}{cyrletters}{"11}
\newcounter{para}
\newtheorem*{Theorem*}{Theorem}
\newtheorem*{Notation*}{Notation}
\newtheorem*{Conj*}{Conjecture}
\newtheorem*{Reform*}{Reformulation}
\newtheorem*{Defi*}{Definition}
\newtheorem*{hypo*}{Hypothesis}
\newtheorem*{prop*}{Proposition}
\newtheorem*{Ques*}{Question}
\newtheorem{Th}{Theorem}[section]
\newtheorem{Lemma}[Th]{Lemma}
\newtheorem{prop}[Th]{Proposition}
\newtheorem{Defi}[Th]{Definition}
\theoremstyle{definition}
\theoremstyle{remark}
\newtheorem{Remark}[Th]{Remark}
\newtheorem*{Remark*}{Remark}
\newtheorem{Example}[Th]{Example}
\theoremstyle{plain} %makes the name of the conjecture/theorem bold
\newtheorem*{intr@thm}{\intr@thmname}
\newtheorem*{c@njecture}{\conjn@name}
\newcommand{\myl@bel}[2]{%
  \protected@write \@auxout {}{\string \newlabel {#1}{{#2}{\thepage}{#2}{#1}{}} }%
  \hypertarget{#1}{}
    } %showing an error here
\newenvironment{labelledconj}[3][]%
    {
        \def\conjn@name{#2}
        \begin{c@njecture}[{#1}]\myl@bel{#3}{#2}
    }
    {
        \end{c@njecture}
    }
\definecolor{cadmiumgreen}{rgb}{0.0, 0.42, 0.24}
\numberwithin{equation}{section}
\begin{document}
\title[]{$\lambda$-invariant stability in Families of Modular Galois Representations}

\author[J.~Hatley]{Jeffrey Hatley}
\address[J.~Hatley]{
Department of Mathematics\\
Union College\\
Bailey Hall 202\\
Schenectady, NY 12308\\
USA}
\email{hatleyj@union.edu}

\author[D.~Kundu]{Debanjana Kundu}
\address[Kundu]{Fields Institute \\ University of Toronto \\
 Toronto ON, M5T 3J1, Canada}
\email{dkundu@math.toronto.edu}

\date{\today}

\keywords{Iwasawa theory, modular forms, Selmer groups, Galois representations}
\subjclass[2020]{11R23 (primary); 11F11, 11R18 (secondary) }

\begin{abstract}
Consider a family of modular forms of weight 2, all of whose residual $\pmod{p}$ Galois representations are isomorphic.
It is well-known that their corresponding Iwasawa $\lambda$-invariants may vary.
In this paper, we study this variation from a quantitative perspective, providing lower bounds on the frequency with which these $\lambda$-invariants grow or remain stable.
\end{abstract}

\maketitle

\section{Introduction}
\label{S: Intro}

Fix an odd prime $p$ and an algebraic closure $\overline{\Q}$ of $\Q$.
Suppose
\[
\overline{\rho} \colon \Gal(\overline{\Q}/\Q) \longrightarrow \GL_2(\F_p)
\]
is an absolutely irreducible, modular Galois representation of weight $2$ and conductor $N$, where $p \nmid N$.
Then by definition, there is some modular newform $g$, also of weight $2$ and level $N$, giving rise to $\overline{\rho}$ in the sense of \S~\ref{Ss:Galois-reps}.
That is, Deligne's associated Galois representation $\rho_g$ has a reduction mod $p$ which is isomorphic to $\overline{\rho}$.
In fact, $N$ is the \emph{optimal level} of $\overline{\rho}$, meaning that its level is minimal among all of the many other modular forms which also give rise to $\overline{\rho}$.
Denote by $\mathcal{H}(g)$ the set of all weight 2 newforms $f$ of level prime to $p$ such that $\bar{\rho}_f \simeq \bar{\rho}$.
To emphasize that $g$ is a modular form of optimal level in this family, we write $\mathcal{H}(g)$ rather than $\mathcal{H}(\overline{\rho})$.
For every $f \in \mathcal{H}(g)$, the level of $f$ is well-understood; see the description in \S~\ref{subsec:CDT}.

An important observation, originally pointed out by R.~Greenberg and V.~Vatsal for \emph{elliptic curves} with good \emph{ordinary} reduction at $p$ in \cite{greenbergvatsal} and later generalized by M.~Emerton, R.~Pollack, and T.~Weston in \cite{epw} to Hida families of $p$-ordinary modular forms, is that if $\mu(g)=0$, then $\mu(f)=0$ for every $f \in \mathcal{H}(g)$.
However, it is \textit{not} necessarily the case that $\lambda(g)=\lambda(f)$.
Rather, $\lambda(f)$ can be calculated from the knowledge of $\lambda(g)$ and some local information about $\rho_g$ and $\rho_f$ (see \S~\ref{S: local factors and growth of lambda}).

The results of \cite{greenbergvatsal} have been extended to the case when $p$ is a prime of supersingular reduction and $a_p=0$ in \cite{kim09,kim13}.
Here, we write $a_p$ to denote the $p$-th Fourier coefficient of the modular form (associated with the elliptic curve).
In \cite{hatleylei2019}, the results of \cite{epw} have been extended to the non-ordinary setting, i.e., when $p \mid a_p$.
In view of these more recent results, we will not assume that $g$ is ordinary at $p$.
In the non-ordinary setting, we have to consider \emph{pairs} of Iwasawa invariants, but we will suppress this in the notation; see \S~\ref{Ss:nonord}.

\begin{Remark}
The variation of $\lambda$-invariants in $\mathcal{H}(g)$ is not an abstract possibility; rather, there are examples in the literature of families $\mathcal{H}(g)$ with unbounded $\lambda$-invariants.
This was first studied by Greenberg for elliptic curves with good ordinary reduction at $p$ in \cite[Corollary~5.6]{Greenberg}.
For some recent examples of papers studying this phenomenon, see for example \cite{matsuno2007,kim09,Ray-lambda}.
\end{Remark}

The goal of this paper is to quantitatively study the variation of $\lambda$-invariants in $\mathcal{H}(g)$.
Starting with a modular form $g$ of weight $2$ and optimal level $N$, we consider the set of positive integers
\[
\mathcal{M}:=\left\{ \mathrm{level}(f) \ | \ f \in \mathcal{H}(g)\right\}.
\]
This is an infinite set and has been studied extensively by H.~Carayol in \cite{Car89} and by F.~Diamond--R.~Taylor in \cite{DT94}.
Our results have a different flavour: we are interested in quantifying how often subsets of $\mathcal{M}$ satisfy additional arithmetic properties.
In particular, we prove the following results:
\begin{itemize}
\item There exists a positive density of $M \in \mathcal{M}$ such that there is an $f$ in $\mathcal{H}(g)$ with level$(f)=M$ and $\lambda(f)=\lambda(g)$ (see Theorem~\ref{thm: reformulation of lambda f equal lambda E case}).
\item There exists a positive density of $M \in \mathcal{M}$ such that there is an $f$ in $\mathcal{H}(g)$ with level$(f)=M$ and $\lambda(f)>\lambda(g)$ (see Theorem~\ref{thm: reformulation of lambda positive}).
\end{itemize}
Moreover, our results are explicit, in the sense that we can compute a formula, depending only on $p$, for the lower bounds for these densities.

\subsection*{Future directions}
In this paper, we make the following assumptions to facilitate the methods employed in \S~\ref{sec:distribution-of-fourier}:
\begin{itemize}
\item $\overline{\rho}$ takes values in $\GL_2(\F_p)$ rather than in $\GL_2(\F_{p^m})$ for some $m \geq 1$;
\item $\overline{\rho}$ is of weight $2$; and
\item $\mathrm{det}(\bar{\rho})$ is exactly the $p$-cyclotomic character.
\end{itemize}
The growth of local factors which contribute to $\lambda$-invariants in more general Hida families of modular forms is briefly studied in \cite[Section~5.2]{epw}.
It would be interesting to remove the assumptions listed above and extend our results to a more general setting.

As discussed in Remark \ref{rmk:how-many-modular-forms-at-M}, it is a difficult open problem to determine, for each $M \in \mathcal{M}$, \textit{how many} newforms of level $M$ belong to $\mathcal{H}(g)$.
Progress towards this problem, which is interesting and important on its own, would also sharpen the results of the present paper.

We provide lower bounds for the densities under consideration, and it would be very interesting to determine upper bounds as well.

\subsection*{Organization}
Including this introduction, the article has five sections.
Section~\ref{S: preliminaries} is preliminary in nature, where we introduce the basic notation and definitions pertaining to Galois representations and Iwasawa theory.
In Section~\ref{sec:distribution-of-fourier} we study the distribution of Fourier coefficients of modular forms using linear algebra.
In Section~\ref{S:Quantifying Carayol's Theorem}, we use the results and techniques introduced in the previous section to quantify a result of Carayol which is central to our paper.
Finally, in Section~\ref{S: local factors and growth of lambda} we prove our main results via a careful study of the variation-of-$\lambda$-invariant formulas proven in \cite{epw} and \cite{hatleylei2019}.
We illustrate our main results with concrete examples.

\subsection*{Acknowledgements}
We thank Anwesh Ray and Rylan Gajek-Leonard for their comments and suggestions.
DK is supported by a PIMS Postdoctoral fellowship.
We thank the referees for their valuable suggestions which greatly improved the final version of this paper.

\section{Preliminaries}
\label{S: preliminaries}

\subsection{Galois representations}
\label{Ss:Galois-reps}
In this section, we recall some standard facts which will be required for our discussion; for details we refer the reader to \cite{DS05}.
Fix a prime $p \geq 3$.
Let $f=\sum_{n \geq 0} a_n(f)q^n \in S_2(\Gamma_0(M),\omega)$ be a modular form of weight $2$ and level $M$, where $p \nmid M$.
Denote by $K$ the number field generated by its Fourier coefficients $\{ a_n(f)\}$.
Let $\mathfrak{p} \mid p$ be a prime of $K$ above $p$, and denote by  $k_\mathfrak{p}$ its residue field.
We will assume that $k_\mathfrak{p}$ is isomorphic to $\mathbb{F}_p$.
%\JH{Otherwise there's no chance that $\overline{\rho}_f \simeq \overline{\rho}_E$}

By the work of P.~Deligne \cite{deligne}, associated to the modular form $f$ is a Galois representation
\[
\rho_f \colon G_\Q \longrightarrow \GL_2(K_\mathfrak{p}).
\]
This representation is unramified outside of $Mp$, and for each prime $\ell \nmid Mp$ we have
\begin{itemize}
\item $\trace \rho_f(\Frob_\ell)=a_\ell(f)$
\item $\det \rho_E(\Frob_\ell)=\ell \omega(\ell)$.
\end{itemize}
More generally, $\det \rho_f = \chi_p \omega$, where $\chi_p$ is the $p$-cyclotomic character and $\omega$ is the nebentypus character of $f$.

One may choose an integral model for $\rho_f$, and then taking composition with the reduction $\mod \mathfrak{p}$ map yields a residual representation
\[
\overline{\rho}_f \colon G_\Q \longrightarrow \GL_2(k_\mathfrak{p}).
\]
The semi-simple reduction of $\overline{\rho}_f$ is well-defined up to conjugation, i.e., independent of the choice of integral model.
Throughout this paper, we will assume our representations are irreducible.

More precisely, we assume that there is a surjective Galois representation
\[
\overline{\rho} \colon G_\Q \longrightarrow \GL_2(\F_p)
\]
realized by a modular form $g \in S_2(\Gamma_0(N))$ of optimal level.
We require that this optimal-level newform $g$ satisfies the following two hypotheses:
\begin{equation}
\tag*{\textup{\textbf{(Hyp~optimal)}}}\label{assmpn: optimal}
\begin{minipage}{0.8\textwidth}
\begin{enumerate}[label=\textup{(}\roman*\textup{)}]
\item \textbf{(Hyp~large-image)}: $\overline{\rho}_g \text{ is surjective}$.
\item \textbf{(Hyp~det)}: $\det \overline{\rho}_g = \chi_p$.
\end{enumerate}
\end{minipage}
\end{equation}
The large image hypothesis is satisfied for all but finitely many $p$ (cf \cite[Theorem~2.2.2]{LoefflerGlasgow}).
%The second hypothesis is equivalent to assuming that $g$ has trivial nebentypus.
If $f \in \mathcal{H}(g)$, then $f$ also satisfies \ref{assmpn: optimal}.

\subsection{The theorems of Carayol and Diamond-Taylor}
\label{subsec:CDT}
Let $g$ and $f$ be newforms of weight $2$ and levels  $N$ and $M$, respectively.
Suppose that $\overline{\rho}_{g}$ is irreducible and that $\overline{\rho}_{g} \simeq \overline{\rho}_f$.
The following theorem by Carayol (see \cite{Car89}) imposes necessary conditions on the level $M$.
By a theorem of Diamond and Taylor, these conditions are also sufficient, see \cite[Theorem~A]{DT94}.
We note that the latter result was refined in \cite{DT2} to include the case $p=3$.

\begin{labelledconj}{Carayol's Theorem}{Thm:Carayol}
Suppose $g$ and $f$ are as described above, and $\overline{\rho}_{g}$ is irreducible.
If $\overline{\rho}_{g} \simeq \overline{\rho}_f$, then
\[
M = N\prod_{\ell}\ell^{\alpha(\ell)} \quad,
\]
and for each $\ell$ with $\alpha(\ell)>0$, one of the following holds:
\begin{enumerate}[label = \textup(\arabic*\textup)]
\item $\ell \nmid N$, $\ell \left(\trace \overline{\rho}_{g}\left(\Frob_{\ell} \right) \right)^2 = \left( 1+\ell\right)^2 \det\overline{\rho}_{g}\left( \Frob_{\ell}\right)$ in $\overline{\F}_p$, and $\alpha(\ell)=1$;
\item $\ell \equiv -1 \pmod{p}$ and one of the following holds:
\begin{enumerate}[label = \textup(\alph*\textup)]
\item $\ell \nmid N$, $\trace \overline{\rho}_{g}\left(\Frob_{\ell} \right) \equiv 0$ in $\overline{\F}_p$, and $\alpha(\ell)=2$;
\item $\ell \pdiv N$, $\det \overline{\rho}_{g}$ is unramified at $\ell$, and $\alpha(\ell)=1$;
\end{enumerate}
\item $\ell \equiv 1 \pmod{p}$ and one of the following holds:
\begin{enumerate}[label = \textup(\alph*\textup)]
\item $\ell \nmid N$ and $\alpha(\ell)=2$;
\item $\ell\pdiv N$ and $\alpha(\ell)=1$ \emph{or} $\ell\nmid N$ and $\alpha(\ell)=1$
\end{enumerate}
\end{enumerate}
\end{labelledconj}

We make a careful study of the conditions listed above in Section \ref{S:Quantifying Carayol's Theorem}.

\subsection{Iwasawa theory}\label{Ss:Iwasawa-thy}
%\href{https://web.ma.utexas.edu/users/kkidwell/lambdainv6.pdf}{Keenan Kidwell's article}
Fix a prime $p> 3$ and an embedding $\iota_p: \overline{\Q} \hookrightarrow \overline{\Q}_p$.
Let $\Q_{\cyc}/\Q$ denote the cyclotomic $\Zp$-extension of $\Q$ and $\Q_{(n)}$ denote the unique subextension of degree $p^n$ over $\Q$ in $\Q_{\cyc}$.

\subsubsection{$p$-adic Selmer groups}
\label{S:Selmer}
Let $f=\sum_{n \geq 0} a_n(f)q^n$ be a modular cuspform of weight $2$, nebentypus $\omega$, and level $N$ such that $p\nmid N$.
For the moment, we assume that $f$ is $p$-ordinary.
Denote by $K$ the number field generated by the Fourier coefficients $\{a_n(f)\}$.
Let $\fp \mid p$ be a prime in $K$ and write $\cO_{\fp}$ for the ring of integers of $K_{\fp}$ with uniformizer $\pi$.

Let $V$ be a $2$-dimensional vector space over $K_{\fp}$ with $G_{\Q}$-action via $\rho_f$.
Fix a $G_{\Q}$-stable $\cO_{\fp}$-lattice $T$ in $V$ and set $A=V/T$.
Since $p$ is an ordinary prime, there is a $G_{\Q_p}$-stable line $V_p \subseteq V$  such that the action of $G_{\Q_p}$ on $V_p$ is given by the product of $\chi_p \omega$ and an unramified character, and the action of $G_{\Q_p}$ on $V/V_p$ is unramified.
Next, we can define
\[
A_p = \image\left( V_p \longrightarrow A\right)
\]
such that the $G_{\Q_p}$-action on $A/A_p$ is unramified.
Let $v$ (resp. $\eta$) denote the unique prime above $p$ (resp. a prime not above $p$) in $\Q_{\cyc}$ and write $I_v$ (resp. $I_{\eta}$) to denote the inertia subgroup of $G_{\Q_{\cyc,v}}$ (resp. $G_{\Q_{\cyc,\eta}}$).
Define
\begin{align*}
H^1_{\ord}\left( \Q_{\cyc,v}, A\right) &= \ker\left(H^1\left( \Q_{\cyc,v},A\right) \longrightarrow H^1\left(I_v, A/A_p\right)\right)\\
H^1_{\ur}\left( \Q_{\cyc,\eta}, A\right) &= \ker\left(H^1\left( \Q_{\cyc,\eta},A\right) \longrightarrow H^1\left(I_{\eta}, A\right)\right).
\end{align*}
The \emph{Selmer group for $f$ over $\Q_{\cyc}$} is defined as the kernel of a global-to-local restriction map, i.e.,
\[
\Sel\left(\Q_{\cyc},A\right) = \ker\left(H^1\left( \Q_{\cyc},A\right) \longrightarrow \prod_{\eta\nmid p} \frac{H^1\left( \Q_{\cyc,\eta},A\right)}{H^1_{\ur}\left( \Q_{\cyc,\eta}, A\right)} \times \prod_{v\mid p}\frac{H^1\left( \Q_{\cyc,v},A\right)}{H^1_{\ord}\left( \Q_{\cyc,v}, A\right)}\right).
\]

\subsubsection{Iwasawa Invariants}
Let $\Gamma:=\Gal(\Q_{\cyc}/\Q)$ be topologically isomorphic to $\Zp$ and write $\cO$ for the ring of integers in a finite extension $\cF/\Q_{p}$.
Write $\Lambda=\cO\llbracket \Gamma \rrbracket$ for the completed group ring of $\Gamma$ with coefficients in $\cO$.
The \emph{Pontryagin dual} of a compact or discrete $\Lambda$-module $M$ is defined as
\[
M^\vee = \Hom_{\cont}\left(M, \cF/\cO \right).
\]
%In this case, $M^\vee$ is a discrete $\cO$-torsion $\Lambda$-module.

We now recall the \emph{Structure Theorem for finitely generated $\Lambda$-modules.}
If $M$ is a finitely generated $\Lambda$-module, then there is a pseudo-isomorphism (i.e., a $\Lambda$-module
homomorphism with finite kernel and cokernel)
\[
M \longrightarrow \Lambda^r \oplus  \bigoplus_{i=1}^t \Lambda/\pi^{m_i} \oplus  \bigoplus_{j=1}^s \Lambda/\fp_j^{n_j},
\]
where $r,s,t\geq 0$ (with the convention that if one of these is zero, the summand does not
appear), $m_i$ and $n_j$ are positive integers, and $\fp_j$ are (not necessarily distinct) height one prime ideals of $\Lambda$.
The integer $r$ is called the \emph{$\Lambda$-rank} of $M$.
We define the Iwasawa invariants as follows:
\begin{align*}
\mu(M) = \sum_{i=1}^t m_i \quad \text{ and } \quad
\lambda(M) = \sum_{j=1}^s \rank_{\cO}\left( \Lambda/\fp_j^{n_j}\right).
\end{align*}
For the discrete module $M^\vee$, set $\corank_{\Lambda}(M^\vee) =\rank_{\Lambda}(M)$, $\mu(M^\vee)=\mu(M)$ and $\lambda(M^\vee)=\lambda(M)$.
We say that $M$ (resp. $M^\vee$) is $\Lambda$-torsion (resp. $\Lambda$-cotorsion) if $r=0$.

\subsubsection{Iwasawa Theory of Selmer groups}
The absolute Galois group $G_{\Q}$ acts via conjugation on the $\cO$-submodule $H^1\left(\Q_{\cyc},A \right)$ with $G_{\Q_{\cyc}}$ acting trivially.
This action allows us to view the global cohomology group as a discrete $\Lambda$-module.
The Selmer group $\Sel\left(\Q_{\cyc},A\right)$ defined in Section~\ref{S:Selmer} is a $G_{\Q}$-stable $\cO$-submodule of $H^1\left(\Q_{\cyc},A\right)$ and is a discrete $\Lambda$-module, as well.

Let $S$ be a finite set of primes containing the archimedean primes, the prime $p$, and the primes where $A$ is ramified.
Denote by $\Q_S$ the maximal $S$-ramified extension of $\Q$.
Then, there is another way to define the Selmer group, which is as follows:
\[
\Sel\left(\Q_{\cyc},A\right) = \ker\left(H^1\left( \Q_S/\Q_{\cyc},A\right) \longrightarrow \prod_{\substack{\eta\mid q\\ q\in S\setminus \{p\}}} \frac{H^1\left( \Q_{\cyc,\eta},A\right)}{H^1_{\ur}\left( \Q_{\cyc,\eta}, A\right)} \times \prod_{v\mid p}\frac{H^1\left( \Q_{\cyc,v},A\right)}{H^1_{\ord}\left( \Q_{\cyc,v}, A\right)}\right).
\]
\cite[Proposition~3]{Gre89} asserts that $\Sel\left(\Q_{\cyc},A\right)$ is a cofinitely generated $\Lambda$-module.
Hence, its Iwasawa $\mu$ and $\lambda$ invariants are well-defined.
For ease of notation, we will henceforth write $\mu(f)$ and $\lambda(f)$ for the Iwasawa invariants of $\Sel\left(\Q_{\cyc},A\right)$.

\subsection{Non-ordinary case}\label{Ss:nonord} Now let us suppose that our modular form $f$ is non-ordinary at $p$, so that $p \mid a_p(f)$.
%We emphasize that we do not require that $a_p(f)=0$.
In this setting, it is appropriate to define a \emph{pair} of (signed) Selmer groups $\Sel^\pm\left(\Q_{\cyc},A\right)$.
In the case when $f$ corresponds to an elliptic curve $\EC_{/\Q}$ and $p$ is a prime of supersingular reduction, it follows from the Hasse--Weil bound that $a_p(f)=0$ whenever $p>3$.
These signed Selmer groups were first defined by S.~Kobayashi in \cite{kobayashi03}.
When $a_p(f)\neq 0$, the corresponding Selmer groups were studied by F.~Sprung in \cite{sprung09}.

To review the construction for more general modular forms, we refer the reader to \cite[\S~2]{hatleylei2019}.
In all of these cases, the construction is similar to that described in the previous subsections, but with modified local conditions at the primes above $p$.

Just as before, one can attach Iwasawa invariants $\mu^\pm(f)$ and $\lambda^\pm(f)$ via the Structure Theorem for the cofinitely-generated $\Lambda$-modules $\Sel^\pm \left(\Q_{\cyc},A\right)$.

\begin{Remark}
Apart from whether we must consider an ordinary Iwasawa invariant (i.e. $\lambda(f)$) or one of two signed invariants (i.e. $\lambda^+(f)$ or $\lambda^-(f))$, the arguments in the rest of the paper have no dependence on whether $f$ is $p$-ordinary or whether $f$ is non-ordinary.
Hence, for notational clarity, in the latter case we suppress the $\pm$ notation entirely.
\end{Remark}

\subsection{Cotorsion hypothesis} For the entirety of this article, and without further mention, we make the following assumption which is crucially required to apply the results of \cite{epw} (when $f$ is $p$-ordinary) and \cite{hatleylei2019} (when $f$ is non-ordinary):
\begin{equation}
\tag*{\textup{\textbf{(Hyp~cot)}}}\label{assmpn: cotorsion}
\Sel\left(\Q_{\cyc},A\right) \text{ is cotorsion over }\Lambda.
\end{equation}
When $f$ has good ordinary reduction at $p$, this hypothesis follows from the work of K.~Kato \cite{Kato}.
In the non-ordinary case, this is known in many cases, e.g. \cite[Theorem~7.3]{kobayashi03} for elliptic curves with $a_p(\EC)=0$, \cite[Theorem~1.2]{sprung09} for elliptic curves with $a_p(\EC)\neq0$, and  \cite[Theorem~6.4]{lei09} for modular forms when $a_p(f)=0$.
Note that we have already begun to suppress the $\pm$ notation when necessary.

\begin{Remark}
It would be possible to remove the assumption \ref{assmpn: cotorsion} using \cite[Corollary~3.4]{hatley-lei-jnt} if one could obtain a more explicit description of the terms called $c(A)$ and $c(B)$ in \textit{loc. cit.}
\end{Remark}

\section{Distribution of Fourier Coefficients}
\label{sec:distribution-of-fourier}
In this section, we want to understand the distribution of Fourier coefficients of non-CM modular forms.
Let $f \in S_2(\Gamma_0(N))$ and let $p \nmid N$ be a fixed prime such that $\overline{\rho}_{f}$ is surjective.
(We want to clarify that the $N$ here is \emph{not} the optimal one but rather a general notation for the discussion in this
section.)
Denote the $\ell$-th Fourier coefficient of $f$ by $a_\ell(f)$ and let $a$ be any fixed integer.
Then, the primary goal of this section will be to understand the following limits.
\begin{align}
\label{part 1 of conj}&\lim_{x \rightarrow \infty} \frac{\# \{\ell \text{ is a prime }<x : a_{\ell}(f) \equiv a \pmod{p}\}}{\pi(x)} \\
\label{part 2 of conj}&\lim_{x \rightarrow \infty} \frac{\# \{\ell \text{ is a prime }<x : a_{\ell}(f) \equiv \pm\left( \ell+1\right) \pmod{p}\}}{\pi(x)}.
\end{align}
Here, we use the standard notation $\pi(x)$ to denote the number of primes up to $x$.

First, we define subsets of invertible matrices with $\F_p$ coefficients.
\begin{Defi}
Let $m,n$ be two integers considered $\pmod{p}$.
Define
\[
C_{m,n} = \left\{\gamma \in \GL_2(\F_p) : \det(\gamma)=m \text{ and } \trace(\gamma)=n\right\}.
\]
\end{Defi}

The following notion of density will be important for our purposes.
\begin{Defi}
A set $\mathcal{S}$ of prime numbers is called a \emph{Chebotarev set} if there
is a Galois extension $K/\Q$ and a conjugacy-stable set $C \subseteq \Gal(K/\Q)$ such that $\mathcal{S}$ agrees with
the set $\{p : \Frob_p \in C\}$ up to a finite set.
Finite unions, finite intersections, and complements of Chebotarev sets are again Chebotarev.
\end{Defi}
The Chebotarev density theorem states
that if $\mathcal{S}$ arises from $K$ and $C$ as above, then the limit
\[
\mathfrak{d}(S) = \lim_{x\rightarrow\infty}
\frac{\# \mathcal{S} \cap [1, x]}{\pi(x)}
\]
exists and equals $\#C/[K : \Q]$.
The quantity $\mathfrak{d}(\mathcal{S})$ is called the \emph{density} of $S$.

We remind the reader that the residual Galois representation
\[
\overline{\rho}_{f}: \Gal\left( \Q(\overline{\rho}_{f})/\Q\right) \longrightarrow \GL_2(\F_p)
\]
satisfies the following two properties for $\ell \nmid N$:
\begin{itemize}
\item $\det\left(\overline{\rho}_{f}\left(\Frob_{\ell}\right)\right) \equiv \ell \pmod{p}$ \emph{and}
\item $\trace\left(\overline{\rho}_{f}\left(\Frob_{\ell}\right)\right) \equiv a_\ell(f) \pmod{p}$.
\end{itemize}
Upon restricting the domain to the finite Galois extension $\Q(\overline{\rho}_{f})/\Q$ cut out by $\overline{\rho}_f$, our surjectivity hypothesis implies $\overline{\rho}_{f}$ is an isomorphism.

\subsection*{Recollections from Linear Algebra}
Recall that matrices in the same conjugacy class have the same characteristic polynomial, i.e., they have the same determinant and the same trace.
The group $\GL_2(\F_p)$ can be divided into the following conjugacy classes:
\begin{itemize}
\item Let $\sD_{a,b}$ be the set of diagonalizable matrices with eigenvalues $a,b\in {\F_p}^\times$ with $a\ne b$.
There are $(p-1)(p-2)/2$ choices of $\sD_{a,b}$ and for each choice, $\#\sD_{a,b}=p(p+1)$.
\item Let $\sN_a$ be the set of non-diagonal matrices with one single eigenvalue $a\in {\F_p}^\times$.
There are $(p-1)$ choices for $\sN_a$ and for each choice, $\#\sN_a=p^2-1$.
\item Let $\sD_a=\left\{\begin{pmatrix}a&0\\0&a\end{pmatrix}\right\}$, $a\in \F_p^\times$.
Then, there are $(p-1)$ choices for $a$ and for each choice $\#\sD_a=1$.
\item Let $\sE_{\lambda}$ be the set of matrices whose eigenvalues are $\lambda$ and $\lambda'$, where $\lambda\in\F_{p^2}\setminus\F_p$ and $\lambda'$ is the conjugate of $\lambda$.
There are $p(p-1)/2$ choices for $\lambda$ and for each choice of $\lambda$, $\# \sE_{\lambda}=p^2-p$.
\end{itemize}

\subsection{Understanding \texorpdfstring{\eqref{part 1 of conj}}{}.}
\begin{Defi}
Let $p>2$ and let $f \in S_2(\Gamma_0(N))$ be a modular form such that $\overline{\rho}_f \colon G_\Q \rightarrow \GL_2(\F_p)$ is surjective.
Fix two integers $i,a\pmod{p}$.
Define $\cQ_{i,a}(f, p, x)$ to be the set of primes $\ell< x$ satisfying the following two conditions:
\begin{enumerate}[label =\textup(\roman*\textup)]
\item $\ell \equiv i \pmod{p}$ \emph{and}
\item $a_{\ell}(f) \equiv a \pmod{p}$
\end{enumerate}
\end{Defi}
Next, define
\[
\cQ_{a}\left( f, p, x\right): = \bigcup_{i=1}^{p-1} \cQ_{i,a}\left(f ,p, x\right) = \left\{\ell \text{ is a prime }<x : a_{\ell}(f) \equiv a \pmod{p}\right\}.
\]
%since $\ell$ is a prime, it can not be $0\pmod{p}$
Since each $\cQ_{i,a}\left(f,p, x\right)$ is disjoint, it is clear that
\[
\#\left\{\ell \text{ is a prime }<x : a_{\ell}(f) \equiv a \pmod{p}\right\} =
\sum_{i=1}^{p-1} \# \cQ_{i,a}(f, p, x).
\]

The sets $\cQ_{i,a}\left(f,p\right)= \lim_{x\rightarrow \infty}\cQ_{i,a}\left(f,p,x\right)$ are Chebotarev sets.
More precisely,
\begin{equation}
\label{eqn: Q to C}
\cQ_{i,a}(f, p) = \left\{\ell: \ell\nmid N \text{ and } \overline{\rho}_{f}\left(\Frob_{\ell}\right)\in C_{i,a}\right\}.
\end{equation}
Thus, to study the density arising in \eqref{part 1 of conj} it suffices to obtain information on the density of $C_{i,a}$ for all $1\leq i \leq p-1$.

\subsubsection{When $a=0$}
\label{a = 0}
The main goal of this section is to prove the following lemma
\begin{Lemma}
\label{Lemma: count a =0}
With notation introduced above,
\[
\frac{\sum_{i=1}^{p-1}\# C_{i,0}}{\# \GL_2\left( \F_p\right)} = \frac{p}{p^2 -1}.
\]
\end{Lemma}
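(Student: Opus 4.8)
The plan is to compute $\sum_{i=1}^{p-1} \# C_{i,0}$ directly using the conjugacy-class decomposition of $\GL_2(\F_p)$ recalled just above the lemma. Recall that $C_{i,0}$ is the set of $\gamma \in \GL_2(\F_p)$ with $\det(\gamma) = i$ and $\trace(\gamma) = 0$; summing over $i \in \F_p^\times$ removes the determinant constraint, so $\sum_{i=1}^{p-1} \# C_{i,0}$ is simply the number of matrices in $\GL_2(\F_p)$ with trace $0$. The strategy is to go through each of the four families $\sD_{a,b}$, $\sN_a$, $\sD_a$, $\sE_\lambda$, determine which ones contain trace-zero matrices and how many, and add up the contributions; then divide by $\#\GL_2(\F_p) = (p^2-1)(p^2-p)$ and check the result simplifies to $p/(p^2-1)$.

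\emph{Key steps, in order.} First, for the split semisimple classes $\sD_{a,b}$: the trace is $a+b$, so $a+b \equiv 0$ forces $b = -a$ with $a \ne -a$ (automatic since $p$ is odd and $a \ne 0$); the unordered pair $\{a,-a\}$ ranges over $(p-1)/2$ choices, each class having size $p(p+1)$, contributing $\tfrac{p-1}{2} \cdot p(p+1)$. Second, for $\sN_a$ (non-semisimple, single eigenvalue $a$): trace is $2a$, which is zero only if $a = 0$, impossible in $\GL_2$ — so this family contributes nothing. Third, for the scalar classes $\sD_a$: trace $2a \ne 0$ likewise, so no contribution. Fourth, for the irreducible classes $\sE_\lambda$ with $\lambda \in \F_{p^2}\setminus\F_p$ and conjugate $\lambda'$: the trace is $\lambda + \lambda' = \operatorname{Tr}_{\F_{p^2}/\F_p}(\lambda)$, which vanishes exactly when $\lambda' = -\lambda$, i.e. $\lambda^{p} = -\lambda$; counting such $\lambda$ up to conjugation and multiplying by $\#\sE_\lambda = p^2-p$ gives that contribution. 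Finally, sum the two nonzero contributions, factor, and divide by $(p^2-1)(p^2-p)$.

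\emph{The main obstacle} is the fourth step: correctly counting the non-split (elliptic) conjugacy classes with vanishing trace. One must count elements $\lambda \in \F_{p^2}$ with $\lambda \notin \F_p$ and $\lambda^p = -\lambda$. Writing $\lambda = \mu$ with $\mu^{p-1} = -1$ (for $\lambda \ne 0$), the number of solutions in $\F_{p^2}^\times$ to $\lambda^{p+1} = -\lambda^2$, or more cleanly noting that $\lambda^p + \lambda = 0$ is $\F_p$-linear in $\lambda$ and cuts out a $1$-dimensional $\F_p$-subspace of $\F_{p^2}$, gives $p$ solutions total; excluding $\lambda = 0$ leaves $p-1$ nonzero ones, none of which lie in $\F_p$ (since $\lambda \in \F_p$ would force $2\lambda = 0$), and grouping into conjugate pairs $\{\lambda, -\lambda\}$ yields $(p-1)/2$ classes. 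So the elliptic contribution is $\tfrac{p-1}{2}(p^2-p)$. Adding: $\tfrac{p-1}{2} p(p+1) + \tfrac{p-1}{2} p(p-1) = \tfrac{p-1}{2} \cdot p \cdot 2p = p^2(p-1)$. Dividing by $\#\GL_2(\F_p) = (p^2-1)(p^2-p) = (p^2-1)p(p-1)$ gives $\dfrac{p^2(p-1)}{p(p-1)(p^2-1)} = \dfrac{p}{p^2-1}$, as claimed. The only real care needed is the bookkeeping of "up to conjugation" in the $\sD$ and $\sE$ families so as not to double-count, and confirming the non-semisimple and scalar families genuinely drop out because $2$ is invertible mod $p$.
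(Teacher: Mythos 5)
Your proposal is correct and follows essentially the same route as the paper: both count trace-zero matrices via the conjugacy-class decomposition of $\GL_2(\F_p)$, finding $\tfrac{p-1}{2}$ split classes $\sD_{a,-a}$ of size $p(p+1)$ and $\tfrac{p-1}{2}$ elliptic classes of size $p^2-p$, then dividing by $\#\GL_2(\F_p)=p(p-1)^2(p+1)$. The only cosmetic difference is that the paper parametrizes the dichotomy by whether $-i$ is a quadratic residue for each fixed determinant $i$, whereas you parametrize the elliptic classes directly by eigenvalues $\lambda$ with $\lambda^p=-\lambda$; the counts agree.
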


\begin{proof}
It is easy to see that $\# \GL_2(\F_p) = p(p-1)^2(p+1)$.
Therefore, we only need to count the numerator, i.e., we need to count matrices $\gamma\in \GL_2(\F_p)$ with $\det \gamma =i$ and $\trace \gamma =0$.
%$a_{\ell} \equiv 0\pmod{p}$ for all $\ell$.
The characteristic polynomial of each such matrix therefore is of the form
\[
%X^2 + \ell \pmod{p}.
X^2 + i \pmod{p}.
\]
For obtaining the count, we proceed by calculating the eigenvalues of these matrices.
Note that
\[
%X^2 + \ell \equiv 0 \pmod{p}
X^2 + i \equiv 0 \pmod{p}
\]
has a solution \emph{precisely} when $-i$ \emph{is a quadratic residue} $\pmod{p}$.
This happens for $\frac{p-1}{2}$ of the residue classes.
On the other hand, this equation \emph{does not} have a solution when $-i$ \emph{is not a quadratic residue} $\pmod{p}$, which also happens for $\frac{p-1}{2}$ of the residue classes.

Suppose that $-i$ is a quadratic residue $\pmod{p}$.
Then, the two eigenvalues must be different.
Such a matrix lies in the conjugacy class $\sD_{a,-a}$.
For each residue class, the contribution is $p(p+1)$.
Therefore, the total contribution is
\[
\frac{(p-1)p(p+1)}{2}.
\]
On the other hand, next suppose that $-i$ is not a quadratic residue $\pmod{p}$.
Then, the characteristic polynomial does not have a solution $\pmod{p}$.
Such a matrix lies in the conjugacy class $\sE_{\lambda}$.
For each residue class, the contribution is $p^2 -p$.
Therefore, the total contribution is
\[
\frac{(p-1)^2p}{2}.
\]

We can now compute the density
\begin{align*}
\frac{\sum_{i=1}^{p-1}\# C_{i,0}}{\# \GL_2\left( \F_p\right)} &= \frac{(p-1)p(p+1) + (p-1)^2p}{2p(p-1)^2(p+1)}\\
&= \frac{p}{(p-1)(p+1)} = \frac{p}{p^2 -1}.
\end{align*}
This completes the proof of the lemma.
\end{proof}

The next result is now immediate from the above discussion.

\begin{prop}\label{prop:al-zero}
Let $p$ be an odd prime and let $f \in S_2(\Gamma_0(N))$ be a modular form such that $\overline{\rho}_f \colon G_\Q \rightarrow \GL_2(\F_p)$ is surjective.
Then,
\[
\lim_{x \rightarrow \infty} \frac{\# \{\ell \text{ is a prime }<x : a_{\ell}(f) \equiv 0 \pmod{p}\}}{\pi(x)} = \frac{p}{p^2 -1}.
\]
\end{prop}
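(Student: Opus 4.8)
The plan is to deduce Proposition~\ref{prop:al-zero} directly from Lemma~\ref{Lemma: count a =0} together with the Chebotarev density theorem, essentially by assembling the pieces already laid out in \S\ref{sec:distribution-of-fourier}. First I would recall that, by surjectivity of $\overline{\rho}_f$, the map $\overline{\rho}_f \colon \Gal(\Q(\overline{\rho}_f)/\Q) \to \GL_2(\F_p)$ is an isomorphism, so that for $\ell \nmid Np$ the conjugacy class of $\overline{\rho}_f(\Frob_\ell)$ is determined by the pair $(\det, \trace) = (\ell \bmod p,\ a_\ell(f) \bmod p)$. Consequently, by \eqref{eqn: Q to C},
\[
\cQ_{i,0}(f,p) = \{\ell : \ell \nmid N \text{ and } \overline{\rho}_f(\Frob_\ell) \in C_{i,0}\},
\]
and since $C_{i,0} \subseteq \GL_2(\F_p)$ is a union of conjugacy classes (being cut out by fixed values of determinant and trace, which are conjugacy invariants), each $\cQ_{i,0}(f,p)$ is a Chebotarev set with density $\#C_{i,0}/\#\GL_2(\F_p)$.

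Next I would sum over $i$. Since $\cQ_0(f,p,x) = \bigsqcup_{i=1}^{p-1} \cQ_{i,0}(f,p,x)$ is a disjoint union (the condition $\ell \equiv i \pmod p$ is mutually exclusive across $i$), and a finite disjoint union of Chebotarev sets is Chebotarev with density the sum of the densities, we obtain
\[
\mathfrak{d}\bigl(\{\ell \text{ prime} : a_\ell(f) \equiv 0 \pmod p\}\bigr) = \sum_{i=1}^{p-1} \frac{\#C_{i,0}}{\#\GL_2(\F_p)} = \frac{\sum_{i=1}^{p-1}\#C_{i,0}}{\#\GL_2(\F_p)}.
\]
By Lemma~\ref{Lemma: count a =0}, the right-hand side equals $p/(p^2-1)$, which is exactly the claimed limit. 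The primes $\ell \mid N$ (there are finitely many) and the possibility $\ell = p$ only affect a finite set and hence do not change the density.

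There is really no single hard step here, since the arithmetic input (the matrix count) is Lemma~\ref{Lemma: count a =0} and the analytic input (existence and value of the density) is the Chebotarev density theorem, both of which are available. The only point requiring a little care is the bookkeeping: making sure that $C_{i,0}$ is genuinely conjugacy-stable so that Chebotarev applies (it is, since $\det$ and $\trace$ are class functions), and that the decomposition $\cQ_0 = \bigsqcup_i \cQ_{i,0}$ is over the correct index set $i \in (\Z/p)^\times$ rather than all of $\Z/p$ — the value $i \equiv 0$ never occurs because $\det \overline{\rho}_f(\Frob_\ell) = \ell \not\equiv 0 \pmod p$ for $\ell \neq p$, and in any case $C_{0,n}$ contains no invertible matrices. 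With these checks in place, the proposition is immediate.
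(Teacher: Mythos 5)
Your argument is correct and is exactly the route the paper takes: the proposition is deduced by combining the Chebotarev density theorem applied to the conjugacy-stable sets $C_{i,0}$ (via the disjoint decomposition $\cQ_0 = \bigsqcup_{i=1}^{p-1}\cQ_{i,0}$) with the matrix count of Lemma~\ref{Lemma: count a =0}, discarding the finitely many primes dividing $Np$. Your extra bookkeeping remarks (conjugacy-stability of $C_{i,0}$ and the exclusion of $i\equiv 0$) are correct and only make explicit what the paper leaves implicit in calling the result ``immediate.''
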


\subsubsection{When $a\neq 0$}
The main goal of this section is to prove that when $a\neq 0$,
\[
\frac{\sum_{i=1}^{p-1}\# C_{i,a}}{\# \GL_2\left( \F_p\right)} = \frac{p^2 - p -1}{(p-1)^2 (p+1)}.
\]

For the sake of clarity of exposition, we carry out the calculation in the special case that $a= \pm 1$.
As will be clear to the reader, the same proof goes through in the general case, as well.
\begin{Lemma}
With notation introduced above,
\[
\frac{\sum_{i=1}^{p-1}\# C_{i,\pm 1}}{\# \GL_2\left( \F_p\right)} = \frac{p^2 - p -1}{(p-1)^2 (p+1)}.
\]
\end{Lemma}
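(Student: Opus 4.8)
The plan is to mimic the proof of Lemma \ref{Lemma: count a =0}, now summing over determinant values $i \in \F_p^\times$ the number of matrices $\gamma \in \GL_2(\F_p)$ with $\trace\gamma = \pm 1$ and $\det\gamma = i$. Fixing the sign (say $+1$; the case $-1$ is identical by symmetry), each such matrix has characteristic polynomial $X^2 - X + i \pmod p$, with discriminant $\Delta_i = 1 - 4i$. As $i$ runs over $\F_p^\times$, the quantity $1 - 4i$ runs over $\F_p \setminus \{1\}$, so I would partition the determinant values $i$ into three cases according to the nature of the roots: (a) $\Delta_i$ is a nonzero square, giving two distinct nonzero eigenvalues and placing $\gamma$ in some $\sD_{a,b}$ with $a \neq b$, $ab = i \neq 0$; (b) $\Delta_i = 0$, i.e. $i \equiv 4^{-1} \pmod p$, giving a repeated eigenvalue $a = 2^{-1} \neq 0$, so $\gamma$ lies in $\sN_a$ or $\sD_a$; (c) $\Delta_i$ is a nonsquare, giving a conjugate pair of eigenvalues in $\F_{p^2}\setminus\F_p$, placing $\gamma$ in some $\sE_\lambda$.

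Next I would count each case. For the split case (a): the number of admissible pairs $\{a,b\}$ with $a+b=1$, $a\neq b$, $a,b\in\F_p^\times$ — there are $(p-1)/2$ such unordered pairs with $a + b = 1$ in total, but I must discard the (at most one) pair where $ab = 0$, which occurs exactly when $\{a,b\} = \{0,1\}$; since $0 \notin \F_p^\times$ this pair is excluded, and also the degenerate case $a = b = 2^{-1}$ is excluded since that has $\Delta_i = 0$. Being careful, the count of unordered pairs $\{a,b\}$ of \emph{distinct nonzero} elements summing to $1$ is $(p-3)/2$, each conjugacy class contributing $p(p+1)$ matrices, for a total of $\tfrac{(p-3)}{2}p(p+1)$. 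For case (b), $\Delta_i = 0$ happens for exactly one value of $i$ (namely $i = 4^{-1}$), with repeated eigenvalue $a = 2^{-1} \in \F_p^\times$; the matrices with trace $1$, determinant $4^{-1}$ are exactly $\sN_a \cup \sD_a$, contributing $(p^2 - 1) + 1 = p^2$ matrices. For case (c): the number of values $i \in \F_p^\times$ with $1 - 4i$ a nonsquare is $(p-1)/2$ (since $1-4i$ ranges over $\F_p \setminus\{1\}$, among which $(p-1)/2$ are nonsquares), each corresponding to a single conjugacy class $\sE_\lambda$ contributing $p^2 - p$ matrices, for a total of $\tfrac{p-1}{2}(p^2-p)$.

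Then I would add: $\sum_{i=1}^{p-1} \# C_{i,1} = \tfrac{(p-3)}{2}p(p+1) + p^2 + \tfrac{(p-1)}{2}(p^2 - p)$. Simplifying, the first term is $\tfrac{p}{2}(p^2 - 2p - 3)$ and the third is $\tfrac{p}{2}(p^2 - 2p + 1)$, summing with $p^2$ to $\tfrac{p}{2}(2p^2 - 4p - 2) + p^2 = p(p^2 - 2p - 1) + p^2 = p^3 - p^2 - p = p(p^2 - p - 1)$. Dividing by $\#\GL_2(\F_p) = p(p-1)^2(p+1)$ yields $\dfrac{p^2 - p - 1}{(p-1)^2(p+1)}$, as claimed; the identical count applies to $a = -1$ (one replaces $X^2 - X + i$ by $X^2 + X + i$, discriminant still $1 - 4i$).

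The main obstacle I anticipate is the bookkeeping in case (a): one must correctly count unordered pairs of distinct nonzero elements with a prescribed sum, excluding the repeated-root value, and I should double-check the edge behaviour for small primes (e.g. $p = 3$, where the split case is empty and $(p-3)/2 = 0$ — consistent, since then $i = 4^{-1} = 1$ is the only determinant value and all of case (c) has $(p-1)/2 = 1$ value $i = 2$ with $1 - 4i = -7 \equiv 2$, a nonsquare mod $3$). I would also remark that the general case of arbitrary $a \neq 0$ proceeds identically: the characteristic polynomial is $X^2 - aX + i$ with discriminant $a^2 - 4i$, which again ranges over $\F_p \setminus \{a^2\}$ as $i$ ranges over $\F_p^\times$ (note $i = 0$ is excluded, corresponding to discriminant $a^2$), so the three cases split as $(p-3)/2$, $1$, and $(p-1)/2$ values exactly as before, giving the same total $p(p^2-p-1)$ independent of $a \neq 0$.
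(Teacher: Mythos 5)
Your proof is correct and follows essentially the same route as the paper: splitting the determinant values $i$ according to whether the discriminant $1-4i$ is a nonzero square, zero, or a nonsquare, and counting via the conjugacy classes $\sD_{a,b}$, $\sN_a\cup\sD_a$, and $\sE_\lambda$ with the same subcounts $\tfrac{p-3}{2}\,p(p+1)$, $p^2$, and $\tfrac{p-1}{2}(p^2-p)$. If anything, you are slightly more precise than the paper (which completes the square and loosely labels the repeated-eigenvalue class as $\sN_1$, $\sD_1$ when the eigenvalue is really $2^{-1}$), but the argument and the arithmetic are the same.
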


\begin{proof}
This time we are counting matrices $\gamma\in \GL_2(\F_p)$ with $\det\gamma = i$ and $\trace \gamma =\pm 1$
%$a_{\ell} \equiv \pm 1\pmod{p}$ for all $\ell$.
The characteristic polynomial of each such matrix therefore is of the form
\[
%X^2 \mp X + \ell \pmod{p}.
X^2 \mp X + i \pmod{p}.
\]
For obtaining the count, we proceed by calculating the eigenvalues of these matrices.
As before, we are interested in the solutions of
\[
%X^2 \mp X + \ell \equiv 0 \pmod{p}.
X^2 \mp X + i \equiv 0 \pmod{p}.
\]
Since $p$ is an odd prime, this is the same as the solutions of the following equation which we obtain by completing the squares
\begin{equation}
\label{eqn: completing the square}
%(2X \mp 1)^2 \equiv 1 - 4\ell \pmod{p}.
(2X \mp 1)^2 \equiv 1 - 4i \pmod{p}.
\end{equation}
Writing $Y = 2X \mp 1$, the above equation has a solution \emph{precisely} when $1-4i$ \emph{is a quadratic residue} $\pmod{p}$.
This happens for $\frac{p-1}{2}$ of the residue classes.
On the other hand, this equation \emph{does not} have a solution when $1-4i$ \emph{is not a quadratic residue} $\pmod{p}$, which also happens for $\frac{p-1}{2}$ of the residue classes.

Suppose that the $1-4i$ is a quadratic residue $\pmod p$.
Note that if $4i \equiv 1\pmod{p}$, then \eqref{eqn: completing the square} has a solution.
Moreover, the eigenvalues must repeat in this case.
The matrices lie in the conjugacy class $\sN_1$ or $\sD_1$.
The contribution from this conjugacy class is
\[
p^2 - 1 + 1 = p^2.
\]
In the remaining $\frac{p-1}{2} - 1 =\frac{p-3}{2}$ conjugacy classes, the two eigenvalues must be different.
Such a matrix lies in the conjugacy class $\sD_{a,b}$.
For each residue class, the contribution is $p(p+1)$.
Therefore, the total contribution of matrices is
\[
\frac{(p-3)p(p+1)}{2}.
\]
When the $1-4i$ is not a quadratic residue $\pmod{p}$, we have mentioned before that the characteristic polynomial does not have a solution $\pmod{p}$.
Such a matrix lies in the conjugacy class $\sE_{\lambda}$.
For each residue class, the contribution is $p^2 -p$.
Therefore, the total contribution is
\[
\frac{(p-1)^2p}{2}.
\]

We can now compute the density
\begin{align*}
\frac{\sum_{i=1}^{p-1}\# C_{i,\pm 1}}{\# \GL_2\left( \F_p\right)} &= \frac{2p^2 + (p-3)p(p+1) + (p-1)^2p}{2p(p-1)^2(p+1)}\\
&= \frac{2p + (p-3)(p+1) + (p^2-2p+1)}{2(p-1)^2(p+1)} \\
&= \frac{2(p^2 - p -1)}{2(p -1)^2 (p+1)} = \frac{p^2 - p -1}{(p-1)^2(p+1)}.
\end{align*}
This completes the proof of the lemma.
\end{proof}

The next result is now immediate from the above discussion.

\begin{prop}\label{prop:al-nonzero}
Let $p$ be an odd prime and let $f \in S_2(\Gamma_0(N))$ be a modular form such that $\overline{\rho}_f \colon G_\Q \rightarrow \GL_2(\F_p)$ is surjective.
Suppose that $a\not \equiv 0\pmod{p}$.
Then,
\[
\lim_{x \rightarrow \infty} \frac{\# \{\ell \text{ is a prime }<x : a_{\ell}(f) \equiv a \pmod{p}\}}{\pi(x)} = \frac{p^2 - p -1}{(p-1)^2(p+1)}.
\]
\end{prop}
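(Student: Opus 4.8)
The plan is to run, for an arbitrary nonzero residue $a$, the same argument used above for $a=\pm1$. First, as in \eqref{eqn: Q to C}, I would decompose the set of primes in question as the disjoint union $\bigsqcup_{i=1}^{p-1}\cQ_{i,a}(f,p)$, where each $\cQ_{i,a}(f,p)$ agrees, up to finitely many primes, with the Chebotarev set $\{\ell : \overline{\rho}_{f}(\Frob_{\ell})\in C_{i,a}\}$. Since $\overline{\rho}_{f}$ is surjective, it induces an isomorphism $\Gal(\Q(\overline{\rho}_{f})/\Q)\cong\GL_2(\F_p)$, and $C_{i,a}$ is conjugacy-stable (being cut out by the trace and the determinant), so the Chebotarev density theorem gives $\mathfrak{d}\big(\cQ_{i,a}(f,p)\big)=\#C_{i,a}/\#\GL_2(\F_p)$. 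Summing over the finitely many classes $i$ then reduces the proposition to the purely combinatorial identity
\[
\frac{\sum_{i=1}^{p-1}\#C_{i,a}}{\#\GL_2(\F_p)}=\frac{p^2-p-1}{(p-1)^2(p+1)}.
\]

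To establish this, I would fix $i\in\F_p^\times$ and analyse the characteristic polynomial $X^2-aX+i$ of the matrices in $C_{i,a}$: completing the square (legitimate since $p$ is odd) shows that the eigenvalue structure is controlled by the discriminant $a^2-4i$. The crucial — and only — use of the hypothesis $a\neq0$ is that $i\mapsto a^2-4i$ is a bijection of $\F_p$, so as $i$ ranges over $\F_p^\times$ the discriminant ranges over $\F_p\setminus\{a^2\}$, and the omitted value $a^2$ is a \emph{nonzero square}. Hence exactly one $i$ (namely $i=a^2/4\neq0$) yields discriminant $0$, exactly $\frac{p-1}{2}-1=\frac{p-3}{2}$ of them yield a nonzero square, and exactly $\frac{p-1}{2}$ yield a non-square; and since the product of the two eigenvalues equals $i\neq0$ in every case, both eigenvalues are units, confirming membership in $\GL_2(\F_p)$.

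Totalling the contributions exactly as in the $a=\pm1$ case, the single $i$ with vanishing discriminant contributes the repeated nonzero eigenvalue $a/2$ and so fills $\sN_{a/2}\cup\sD_{a/2}$, of size $(p^2-1)+1=p^2$; each of the $\frac{p-3}{2}$ values with nonzero-square discriminant contributes a class $\sD_{\alpha,\beta}$ of size $p(p+1)$; and each of the $\frac{p-1}{2}$ values with non-square discriminant contributes a class $\sE_\lambda$ of size $p^2-p$. This gives
\[
\sum_{i=1}^{p-1}\#C_{i,a}=p^2+\frac{(p-3)p(p+1)}{2}+\frac{(p-1)^2p}{2},
\]
and dividing by $\#\GL_2(\F_p)=p(p-1)^2(p+1)$ and simplifying produces $\frac{p^2-p-1}{(p-1)^2(p+1)}$, exactly as in the preceding lemma.

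I do not anticipate a genuine obstacle; the computation is the same bookkeeping already carried out for $a=\pm1$. The one point that deserves care is confirming that the quadratic-residue census behaves uniformly in the nonzero parameter $a$ — that is, that $i\mapsto a^2-4i$ is a bijection of $\F_p$ and that $a^2$ is always a nonzero square, never $0$ and never a non-residue — so that the counts of the three eigenvalue types, and hence the resulting density, are genuinely independent of the particular nonzero $a$.
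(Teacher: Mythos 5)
Your proposal is correct and follows essentially the same route as the paper: reduce via the Chebotarev density theorem (using surjectivity of $\overline{\rho}_f$ and the sets $\cQ_{i,a}$ as in \eqref{eqn: Q to C}) to the count $\sum_{i=1}^{p-1}\#C_{i,a}/\#\GL_2(\F_p)$, then classify by the discriminant $a^2-4i$ into the repeated-eigenvalue, split, and non-split cases with contributions $p^2$, $p(p+1)$, and $p^2-p$. The paper carries out this computation explicitly only for $a=\pm1$ and remarks that the general case is identical; your discriminant bookkeeping (in particular that $a\neq0$ forces the omitted value $a^2$ to be a nonzero square, so exactly one $i$ gives a repeated eigenvalue) is precisely the uniformity-in-$a$ check that justifies that remark.
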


\begin{Remark}
Adding up the proportions, we obtain
\[
\frac{p}{p^2 - 1} + (p-1)\frac{p^2 - p -1}{(p-1)^2 (p+1)} = \frac{p^2 -1 }{p^2 -1} = 1,
\]
as desired.
\end{Remark}

\subsection{Understanding \texorpdfstring{\eqref{part 2 of conj}}{}.}
\label{section: understanding part 2 of the conj}
\begin{Defi}
Let $p>2$ and let $f \in S_2(\Gamma_0(N))$ be a modular form such that $\overline{\rho}_f \colon G_\Q \rightarrow \GL_2(\F_p)$ is surjective.
Define $\cP_{i}(f, p,x)$ to be the set of primes $\ell<x$ satisfying the following two conditions:
\begin{enumerate}[label =\textup(\roman*\textup)]
\item $\ell \equiv i \pmod{p}$ \emph{and}
\item $a_{\ell} \equiv (i+1) \pmod{p}$.
\end{enumerate}
\end{Defi}
Then
\[
\left\{\ell \text{ is a prime }<x : a_{\ell}(f) \equiv \left( \ell+1\right) \pmod{p}\right\} = \bigcup_{i=1}^{p-1} \cP_i\left(f ,p, x\right).
\]
Since each $\cP_i\left( f,p, x\right)$ is disjoint, it is clear that
\[
\#\left\{\ell \text{ is a prime }<x : a_{\ell}(f) \equiv \left( \ell+1\right) \pmod{p}\right\} =
\sum_{i=1}^{p-1} \# \cP_{i}(f, p, x).
\]

In our situation, the sets $\cP_i\left(f,p\right) = \lim_{x\rightarrow \infty} \cP_i\left(f,p, x\right)$ are Chebotarev sets.
More precisely,
\begin{equation}
\label{eqn: P to C}
\cP_i(f, p) = \left\{\ell: \ell\nmid N \text{ and } \overline{\rho}_{f}\left(\Frob_{\ell}\right)\in C_{i,i+1}\right\}.
\end{equation}
We note for future reference that, upon replacing the condition $a_\ell \equiv (i+1) \pmod p$ with $a_\ell \equiv -(i+1) \pmod p$, one also obtains Chebotarev sets
\begin{equation*}
%\label{eqn: P to C}
\cP_i^-(f, p) = \left\{\ell: \ell\nmid N \text{ and } \overline{\rho}_{f}\left(\Frob_{\ell}\right)\in C_{i,-(i+1)}\right\}.
\end{equation*}

Hence, to study the density arising in \eqref{part 2 of conj} it suffices to obtain information on the density of $C_{i,i+1}$ for all $1\leq i \leq p-1$.
Every element $\gamma$ in $C_{i,i+1}$ satisfies the following two equations:
\[
\begin{split}
\det (\gamma) & \equiv i \pmod{p}\\
\trace (\gamma) & \equiv i+1 \pmod{p}.
\end{split}
\]
The characteristic polynomial of interest is therefore
\[
X^2 - \trace(\gamma) X + \det(\gamma)I_2 =0.
\]
Suppose that the eigenvalues are $\lambda_1, \lambda_2$.
Then an invertible matrix $\gamma$ is in the set $C_{i,i+1}$ precisely when
\[
\begin{split}
\det (\gamma) &= \lambda_1 \lambda_2 \equiv i \pmod{p}\\
\trace (\gamma) &= -(\lambda_1 + \lambda_2) \equiv i+1 \pmod{p}.
\end{split}
\]
It is easy to check that the eigenvalues are $\equiv -1 \text{ or } -i\pmod{p}$.
This tells us that $\lambda_1, \lambda_2\in \F_p^\times$ and we only need to focus on the first three conjugacy classes.
When $i=1$, the element $\gamma$ belongs to the conjugacy class $\sN_1$ or $\sD_1$ because both the eigenvalues are equal.
Therefore,
\begin{equation}
\label{count C12}
\# C_{1,2}= \# \sN_{1} + \# \sD_{1} = (p^2-1) + 1 = p^2.
\end{equation}
On the other hand, when $i\neq 1$, the element $\gamma$ lies in the conjugacy class $\sD_{1,i}$.
Therefore,
\begin{equation}
\label{C i i+1 equation}
\# C_{i,i+1} = p(p+1).
\end{equation}
It follows that
\begin{equation*}\label{eq:Pi-sum}
\sum_{i=1}^{p-1}\frac{\#C_{i,i+1}}{\# \GL_2(\F_p)} = \frac{p^2 + (p-2)p(p+1)}{p(p-1)^2 (p+1)} = \frac{p^2 -2}{(p-1)^2(p+1)}.
\end{equation*}
By repeating the calculations done above, we also obtain that
\begin{equation}
\label{C i - i+1 equation}
\# C_{i,-(i+1)} = p(p+1).
\end{equation}
We have therefore proven the following result:
\begin{Lemma}
With notation as above,
\[
\sum_{i=1}^{p-1}\frac{\#C_{i,-(i+1)}}{\# \GL_2(\F_p)} = \frac{p^2 -2}{(p-1)^2(p+1)}.
\]
\end{Lemma}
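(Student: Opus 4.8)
The plan is to repeat, almost verbatim, the eigenvalue bookkeeping just used to evaluate $\sum_{i}\#C_{i,i+1}$, now with the trace condition $\trace(\gamma)\equiv -(i+1)\pmod p$. Fix $i\in\{1,\dots,p-1\}$ and take $\gamma\in C_{i,-(i+1)}$. Its characteristic polynomial is
\[
X^2-\trace(\gamma)X+\det(\gamma)=X^2+(i+1)X+i=(X+1)(X+i),
\]
so the eigenvalues of $\gamma$ are $-1$ and $-i$, both lying in $\F_p^\times$ since $i\not\equiv 0\pmod p$. In particular no element of $C_{i,-(i+1)}$ lies in a conjugacy class of type $\sE_\lambda$, so only the first three families from the linear-algebra recollections can contribute.

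Next I would split according to whether the two eigenvalues coincide. They are equal exactly when $-1\equiv -i\pmod p$, i.e. when $i=1$; in that case $\gamma$ has the single eigenvalue $-1$, hence lies in $\sN_{-1}$ or in $\sD_{-1}$, giving
\[
\#C_{1,-2}=\#\sN_{-1}+\#\sD_{-1}=(p^2-1)+1=p^2.
\]
For each of the remaining $p-2$ values $i\neq 1$, the eigenvalues $-1$ and $-i$ are distinct elements of $\F_p^\times$, so $\gamma$ lies in the conjugacy class $\sD_{-1,-i}$, whence $\#C_{i,-(i+1)}=p(p+1)$; this is precisely \eqref{C i - i+1 equation}.

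Finally I would assemble the sum, using $\#\GL_2(\F_p)=p(p-1)^2(p+1)$:
\[
\sum_{i=1}^{p-1}\frac{\#C_{i,-(i+1)}}{\#\GL_2(\F_p)}
=\frac{p^2+(p-2)\,p(p+1)}{p(p-1)^2(p+1)}
=\frac{p+(p-2)(p+1)}{(p-1)^2(p+1)}
=\frac{p^2-2}{(p-1)^2(p+1)},
\]
which is the desired identity.

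There is no serious obstacle here: the argument is strictly parallel to the $C_{i,i+1}$ computation, and the factorization $(X+1)(X+i)$ does all of the work, guaranteeing split semisimple eigenvalues in $\F_p^\times$. The only point demanding genuine care is not to be lulled by the symmetry with the $+(i+1)$ case into overlooking the degenerate value $i=1$, where the eigenvalues collide and the count $p^2$ — rather than $p(p+1)$ — must be used; handling that single term correctly is what produces the numerator $p^2-2$ rather than $p^2-1$.
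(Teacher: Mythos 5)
Your proof is correct and follows essentially the same route as the paper, which simply says to repeat the $C_{i,i+1}$ computation: factor the characteristic polynomial as $(X+1)(X+i)$, count $p^2$ for the degenerate class at $i=1$ and $p(p+1)$ for each $i\neq 1$, and sum. Your explicit handling of the $i=1$ collision is exactly the case distinction the paper's terse remark (and its display giving $\#C_{i,-(i+1)}=p(p+1)$ for the non-degenerate $i$) relies on to produce the numerator $p^2-2$.
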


\section{Quantifying \ref{Thm:Carayol}}
\label{S:Quantifying Carayol's Theorem}
\subsection{Simplifying the problem}
\label{reformulating carayol}
We fix a non-CM modular form $g \in S_2(\Gamma_0(N))$ satisfying \ref{assmpn: optimal} and \ref{assmpn: cotorsion}, fix a large $x$, and vary $\ell$ over all primes less than $x$.
Our first goal is to count how many $M$ may possibly arise from \ref{Thm:Carayol}.

We make the following observations which will simplify our counting problem:
\begin{enumerate}
\item Since we are only interested in asymptotics and $N$ is divisible by finitely many $\ell$, we can (and will) ignore the behaviour of $\ell\mid N$ in \ref{Thm:Carayol}.
\item The condition $\ell \left(\trace \overline{\rho}_{g}\left(\Frob_{\ell} \right) \right)^2 = \left( 1+\ell\right)^2 \det\overline{\rho}_{g}\left( \Frob_{\ell}\right)$ in $\overline{\F}_p$ that arises in \ref{Thm:Carayol} can be simplified in our situation due to our assumptions that $g$ has trivial nebentypus and $\overline{\rho}_g$ takes values in $\GL_2(\F_p)$.
In particular, as noted in Section~\ref{S: preliminaries}, we have
\begin{align*}
\trace \overline{\rho}_{g}\left(\Frob_{\ell} \right)&\equiv a_{\ell}(g) \pmod{p}\\
\det \rho_{g}\left(\Frob_{\ell} \right)&\equiv \ell \pmod{p}
\end{align*}
where the quantities above belong to $\F_p$, and so the aforementioned condition from \ref{Thm:Carayol} becomes
\[
a_{\ell}\equiv \pm (1+\ell)\pmod{p}.
\]
%We further comment that it suffices to work $\pmod{p}$ (rather than over $\overline{\F}_p$) because we impose the condition that $\overline{\rho}_{\EC}$ comes from an elliptic curve, hence $a_\ell$ is an integer.
Similarly, since $\det \overline{\rho}_{g}$ is the $p$-cyclotomic character, the condition in \ref{Thm:Carayol} that $\det \overline{\rho}_{g}$ is unramified at $\ell$ is vacuous.
\end{enumerate}

For the rest of this section, we simplify the notation by omitting $g$, writing e.g. $a_\ell$ instead of $a_\ell(g)$.

The primes that \emph{do not} divide $N$ can be divided into the following \emph{disjoint} sets:
\begin{itemize}
\item Set 1: $\ell\nmid N$, $\ell \not\equiv \pm 1\pmod{p}$, and $a_{\ell}\equiv \pm (1+\ell)\pmod{p}$.
\item Set 1': $\ell\nmid N$, $\ell \not\equiv \pm 1\pmod{p}$, and $a_{\ell}\not\equiv \pm (1+\ell)\pmod{p}$.
\item Set 2: $\ell\nmid N$, $\ell \equiv -1 \pmod{p}$, and $a_{\ell} \equiv 0 \pmod{p}$.
\item Set 2': $\ell\nmid N$, $\ell \equiv -1 \pmod{p}$, and $a_{\ell} \not\equiv 0 \pmod{p}$.
\item Set 3: $\ell\nmid N$ and $\ell \equiv 1\pmod{p}$.
\end{itemize}
We introduce the following piece of notation which will be used in the remainder of the paper.
%\DK{Set 1 and Set 2 are not disjoint but we want them to be written separately.}
\begin{Notation*}
For a set of primes $\mathcal{S}$, we write $\mathcal{S}(x):= \mathcal{S}\cap [1,x)$.
%to denote the primes $\ell< x$ which lie in $\mathcal{S}$.
\end{Notation*}

We make the following key observations for counting:
\begin{enumerate}
\item Denote Set 1 by $\mathfrak{S}_1$; and set $s_1(x) = \#\mathfrak{S}_1\cap [1,x)$.
For each prime in this set, there are two options: we can either pick it (in which case $\alpha(\ell)=1$) or not pick it (in which case $\alpha(\ell)=0$).
In short, there are $2^{s_1(x)}$ many ways of making a choice from this set.
\item Denote Set 2 by $\mathfrak{S}_2$; and set $s_2(x) = \#\mathfrak{S}_2\cap [1,x)$.
For each prime in this set, there are again three options: we can either pick it (in which case $\alpha(\ell)=1$ or $2$) or not pick it (in which case $\alpha(\ell)=0$).
So there are $3^{s_2(x)}$ ways of picking primes from this set.
\item Denote Set 3 by $\mathfrak{S}_3$; and set $s_3(x) = \#\mathfrak{S}_3\cap [1,x)$.
For each prime in this set there are 3 options.
We may not pick the prime (in which case $\alpha(\ell)=0$) or pick it once (in which case $\alpha(\ell)=1$) or pick it twice (in which case $\alpha(\ell)=2$).
So there are $3^{s_3(x)}$ ways of picking primes from this set.
\item Any prime in Set 1' or Set 2' \emph{does not} arise in \ref{Thm:Carayol}.
These primes always have $\alpha(\ell)=0$.
For the remainder of this section, we will call them \emph{discard sets}.
%\DK{call this set $4$? and combine Sets $1'$ and $2'$?}
\end{enumerate}
Thus, the total number of possibilities of an integer $M$ in \ref{Thm:Carayol} is precisely the number of ways to pick a collection of primes up to $x$ from one of the sets above.
This number is
\begin{equation}
\label{count from Carayol}
2^{s_1(x)}\times 3^{s_2(x)+s_3(x)}.
\end{equation}
%And for each such choice, we know there exists at least one modular form (of level $M$).
But, there is one case when we did not pick any primes $\ell$ from any of the sets.
In this case, $M=N$ and we ignore it.
For each of the other cases, \cite[Theorem~A]{DT94} (applied iteratively) asserts that there is \emph{at least one} honest modular form attached to the $M$ from \ref{Thm:Carayol}.
Our objective for the rest of this section is to describe the asymptotic behaviour of the functions $s_i(x)$.

\subsection{Studying the density of each set}
We now apply the results of Section \ref{sec:distribution-of-fourier} to the study of the functions $s_i(x)$.

\subsection*{Determining the density of the primes in Set 1}

\begin{Lemma}
\label{box 1 count}
Write $\pi(x)$ to denote the number of primes up to $x$.
Then
\[
\lim_{x\to \infty}\frac{s_1(x)}{\pi(x)} = \frac{2(p-3)}{(p-1)^3}.
\]
\end{Lemma}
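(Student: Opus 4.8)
The plan is to express $s_1(x)$ as the cardinality of a Chebotarev set and then compute its density using the conjugacy-class counts recalled in Section~\ref{sec:distribution-of-fourier}. Recall that Set~1 consists of primes $\ell \nmid N$ with $\ell \not\equiv \pm 1 \pmod p$ and $a_\ell \equiv \pm(1+\ell) \pmod p$. By \eqref{eqn: P to C} and the definition of $\cP_i^-$, for each fixed residue class $i \pmod p$ with $i \not\equiv \pm 1$, the condition ``$a_\ell \equiv (i+1)$ or $a_\ell \equiv -(i+1) \pmod p$'' translates, via surjectivity of $\overline{\rho}_g$, into the condition $\overline{\rho}_g(\Frob_\ell) \in C_{i,i+1} \cup C_{i,-(i+1)}$. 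So the first step is to observe
\[
\mathfrak{S}_1 \doteq \Bigl\{\ell : \overline{\rho}_g(\Frob_\ell) \in \bigcup_{\substack{i=1 \\ i \not\equiv \pm 1}}^{p-1} \bigl(C_{i,i+1} \cup C_{i,-(i+1)}\bigr)\Bigr\},
\]
where $\doteq$ denotes equality up to the finite set of primes dividing $N$; this is a Chebotarev set, so by the Chebotarev density theorem its density exists and equals the quantity $\bigl(\sum \#C_{i,i+1} + \sum \#C_{i,-(i+1)}\bigr)/\#\GL_2(\F_p)$, where both sums run over $i \not\equiv \pm 1 \pmod p$.

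The second step is the combinatorial bookkeeping: I need to disentangle the ``$+$'' and ``$-$'' conditions, i.e.\ to rule out double-counting of primes satisfying $a_\ell \equiv (i+1)$ \emph{and} $a_\ell \equiv -(i+1)$ simultaneously. This forces $2(i+1) \equiv 0 \pmod p$, hence $i \equiv -1 \pmod p$ (since $p$ is odd), which is excluded from Set~1. Thus for $i \not\equiv \pm 1$ the sets $C_{i,i+1}$ and $C_{i,-(i+1)}$ are disjoint and the two contributions simply add. Now I invoke the explicit counts already established: by \eqref{C i i+1 equation}, $\#C_{i,i+1} = p(p+1)$ for $i \neq 1$ (and in particular for all $i \not\equiv \pm 1$, since such $i$ excludes $1$), and by \eqref{C i - i+1 equation}, $\#C_{i,-(i+1)} = p(p+1)$; here one should double-check that the $i=1$ exceptional value $\#C_{1,2}=p^2$ does not intrude, which it doesn't because $i=1$ is excluded, and similarly that the corresponding exceptional value for $C_{i,-(i+1)}$ (occurring at $i \equiv -1$, where the eigenvalues coincide) is also excluded. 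There are exactly $p-3$ admissible residue classes $i$ (all of $1,\dots,p-1$ except $i \equiv 1$ and $i \equiv -1$), so the numerator is $(p-3)\bigl(p(p+1) + p(p+1)\bigr) = 2p(p+1)(p-3)$.

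The third step is to divide by $\#\GL_2(\F_p) = p(p-1)^2(p+1)$, which gives
\[
\lim_{x\to\infty}\frac{s_1(x)}{\pi(x)} = \frac{2p(p+1)(p-3)}{p(p-1)^2(p+1)} = \frac{2(p-3)}{(p-1)^2} \cdot \frac{1}{p-1}\cdot(p-1) = \frac{2(p-3)}{(p-1)^3} \cdot (p-1)?
\]
Here I must be careful with the arithmetic: $\frac{2p(p+1)(p-3)}{p(p-1)^2(p+1)} = \frac{2(p-3)}{(p-1)^2}$, which does not match the claimed $\frac{2(p-3)}{(p-1)^3}$. This discrepancy is the main obstacle, and it signals that the correct count over residue classes is subtler: the condition $\ell \equiv i \pmod p$ picks out only those Frobenius elements with a \emph{prescribed determinant} $i$, but I have been summing the full conjugacy-class sizes $\#C_{i,i+1}$, which are the sizes within all of $\GL_2(\F_p)$ and already incorporate the determinant constraint. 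The subtlety is that within the image group, the density of $\{\Frob_\ell : \overline{\rho}_g(\Frob_\ell) \in C\}$ is $\#C/\#\GL_2(\F_p)$, but the \emph{frequency of a particular level $M$}, or the intended normalization, may differ by a factor of $p-1$ coming from how determinants distribute; resolving exactly which normalization the lemma intends — and correspondingly inserting the factor $\tfrac{1}{p-1}$ that converts $\tfrac{2(p-3)}{(p-1)^2}$ into $\tfrac{2(p-3)}{(p-1)^3}$ — is the crux of the argument, and I would pin it down by carefully re-reading the definition of $s_1(x)$ as a count of primes (not of pairs (prime, choice)) against the Chebotarev normalization by $\pi(x)$, double-checking whether an extra averaging over the $p-1$ possible residues $i$ has been implicitly built into the indexing.
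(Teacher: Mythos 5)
Your computation follows the paper's own proof essentially verbatim: you identify $\mathfrak{S}_1$ (up to finitely many primes) with the Chebotarev set of primes whose Frobenius image lies in $\bigcup_{i\not\equiv\pm1}\bigl(C_{i,i+1}\cup C_{i,-(i+1)}\bigr)$, note the two trace conditions are disjoint for $i\not\equiv-1$, and compute the numerator $(p-3)\cdot 2\cdot p(p+1)$ against $\#\GL_2(\F_p)=p(p-1)^2(p+1)$, obtaining $\tfrac{2(p-3)}{(p-1)^2}$. That is exactly the value the paper's proof of this lemma arrives at, so your arithmetic and approach are correct and identical to the paper's.

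Where your proposal goes astray is the final paragraph. The mismatch with the displayed statement is not a gap in your argument but a typo in the lemma as stated: the exponent should be $2$, not $3$. This is confirmed internally by the paper, whose proof of the lemma ends with $\tfrac{2(p-3)}{(p-1)^2}$ ``as claimed,'' and by Lemma~\ref{eq:box-1-delta-zero}, which gives $\mathfrak{d}(\mathfrak{S}_{1,\delta=0})=\tfrac{p-3}{(p-1)^2}$ together with $\mathfrak{d}_1=\mathfrak{d}(\mathfrak{S}_{1,\delta=0})/\mathfrak{d}(\mathfrak{S}_{1})=\tfrac12$; these are only consistent with $\mathfrak{d}(\mathfrak{S}_1)=\tfrac{2(p-3)}{(p-1)^2}$. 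Your suggested remedy---inserting an extra factor $\tfrac{1}{p-1}$ ``from how determinants distribute''---is incorrect and should be dropped: the congruence $\ell\equiv i\pmod p$ is precisely the condition $\det\overline{\rho}_g(\Frob_\ell)=i$, which is already encoded in the definition of $C_{i,i+1}$, and the Chebotarev density theorem applied to the surjective representation gives density $\#C/\#\GL_2(\F_p)$ with no further averaging over determinant classes; inserting $\tfrac1{p-1}$ would count that constraint twice. A minor side point: the repeated-eigenvalue exception for $C_{i,-(i+1)}$ occurs at $i\equiv 1\pmod p$ (where the two eigenvalues coincide), not at $i\equiv-1$; since both residues are excluded from Set~1, this does not affect the count.
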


\begin{proof}
Recall that the primes counted by $s_1(x)$ are those for which all of the following conditions hold:
\begin{enumerate}[label = (\roman*)]
\item $\ell \nmid N$;
\item $\ell \not\equiv \pm 1\pmod{p}$
\item $a_\ell \equiv \pm \left( \ell+1\right) \pmod{p}$.
\end{enumerate}
Then the results of Section \ref{section: understanding part 2 of the conj} yield
\begin{align*}
\lim_{x\to \infty}\frac{s_1(x)}{\pi(x)} &= \mathfrak{d}(\mathfrak{S}_1) \\
&= \sum_{\ell=2}^{p-2}\left(\mathfrak{d}(\cP_\ell(f, p)) + \mathfrak{d}(\cP_\ell^-(f, p))\right) \\
&=(p-3) \left(\frac{\#C_{\ell,\ell+1}}{\# \GL_2(\F_p)} + \frac{\#C_{\ell,-(\ell+1)}}{\# \GL_2(\F_p)}\right)  \\
&= \frac{2p(p-3)(p+1)}{p(p-1)^2 (p+1)}\\ &= \frac{2(p-3)}{(p-1)^2}
\end{align*}
as claimed.
\end{proof}

\subsection*{Determining the density of the primes in Set 2}

\begin{Lemma}
\label{box 2 count}
Write $\pi(x)$ to denote the number of primes up to $x$.
Then
\[
\lim_{x \to \infty}\frac{s_2(x)}{\pi(x)} = \frac{1}{(p-1)^2}.
\]
\end{Lemma}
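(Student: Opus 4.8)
The plan is to recognize that $s_2(x)$ counts primes $\ell < x$ satisfying three conditions: $\ell \nmid N$, $\ell \equiv -1 \pmod{p}$, and $a_\ell(g) \equiv 0 \pmod{p}$. As in the previous lemma, the condition $\ell \nmid N$ removes only finitely many primes and so does not affect the density. The remaining two conditions translate, via the dictionary $\trace \overline{\rho}_g(\Frob_\ell) \equiv a_\ell \pmod p$ and $\det \overline{\rho}_g(\Frob_\ell) \equiv \ell \pmod p$, into the single Chebotarev condition $\overline{\rho}_g(\Frob_\ell) \in C_{-1,0}$, i.e. the set of matrices in $\GL_2(\F_p)$ with determinant $-1$ and trace $0$. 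Since $\overline{\rho}_g$ is surjective, the Chebotarev density theorem gives
\[
\lim_{x\to\infty}\frac{s_2(x)}{\pi(x)} = \frac{\#C_{-1,0}}{\#\GL_2(\F_p)}.
\]

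So the whole problem reduces to computing $\#C_{-1,0}$, which is already essentially done inside the proof of Lemma~\ref{Lemma: count a =0}: there we counted matrices with trace $0$ and determinant $i$ by examining the characteristic polynomial $X^2 + i$, finding that the matrix lies in a split torus class $\sD_{a,-a}$ (contributing $p(p+1)$) when $-i$ is a square mod $p$ and in an elliptic class $\sE_\lambda$ (contributing $p^2-p$) when $-i$ is a non-square. Here $i = -1$, so I need only decide whether $-i = 1$ is a quadratic residue mod $p$ — it always is, being $1^2$. Hence the matrices in $C_{-1,0}$ are diagonalizable with distinct eigenvalues $a, -a$ where $a^2 = 1$, i.e. $a = \pm 1$; these two choices give the same conjugacy class $\sD_{1,-1}$, so $\#C_{-1,0} = \#\sD_{1,-1} = p(p+1)$.

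Plugging in and using $\#\GL_2(\F_p) = p(p-1)^2(p+1)$ yields
\[
\lim_{x\to\infty}\frac{s_2(x)}{\pi(x)} = \frac{p(p+1)}{p(p-1)^2(p+1)} = \frac{1}{(p-1)^2},
\]
as claimed. There is no real obstacle here; the only point requiring a moment's care is confirming that the single residue class $i = -1$ falls on the quadratic-residue side of the dichotomy from Lemma~\ref{Lemma: count a =0} (it does, trivially, since $1$ is a square), and that the eigenvalue pair $\{1,-1\}$ is genuinely distinct for $p$ odd so that we land in $\sD_{a,b}$ rather than $\sN_a$ or $\sD_a$.
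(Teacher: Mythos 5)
Your proof is correct and follows essentially the same route as the paper's: both reduce the problem via Chebotarev to computing $\#C_{-1,0}/\#\GL_2(\F_p)$ (the paper writes $C_{p-1,0}$, the same class) and then identify, using the counting from Lemma~\ref{Lemma: count a =0}, that since $1=-(-1)$ is always a square mod $p$, the matrices fall in the split class $\sD_{1,-1}$ of size $p(p+1)$. Your extra remark that the eigenvalues $\{1,-1\}$ are distinct for $p$ odd (so one really lands in $\sD_{a,b}$ rather than $\sN_a$ or $\sD_a$) makes explicit a point the paper leaves to the reader.
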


\begin{proof}
Recall that the primes counted by $s_2(x)$ are those for which all of the following conditions hold:

\begin{enumerate}[label = (\roman*)]
\item $\ell \nmid N$;
\item $\ell \equiv -1 \pmod{p}$;
\item $a_{\ell}\equiv 0 \pmod{p}$.
\end{enumerate}
We are thus computing the density $\mathfrak{d}(\mathfrak{S}_2)$, for which we will rely on the calculations done in Section~\ref{a = 0}.
Since condition (ii) is equivalent to saying that $-\ell \equiv 1 \pmod{p}$ and $1$ is always a quadratic residue $\pmod{p}$, it follows from the counting argument used in the proof of Lemma~\ref{Lemma: count a =0} that the desired density is given by
\[
\left(\frac{\# C_{p-1,0}}{\# \GL_2(\F_p)}\right) = \frac{p(p+1)}{p(p-1)^2 (p+1)} =  \frac{1}{(p-1)^2}.
\]
(Here, we only have to keep track of the count in one residue class.)
\end{proof}

\subsection*{Determining the density of the primes in Set 3:}
For this set the counting is considerably simpler.
\begin{Lemma}
\label{box 3 count}
Write $\pi(x)$ to denote the number of primes up to $x$.
Then,
\[
\lim_{x \to \infty} \frac{s_3(x)}{\pi(x)} = \frac{1}{p-1}.
\]
\end{Lemma}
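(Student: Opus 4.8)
The plan is to exploit the fact that, in contrast with Sets~1 and~2, membership in Set~3 imposes \emph{no} condition on the Fourier coefficient $a_\ell$: the only requirements are $\ell \nmid N$ and $\ell \equiv 1 \pmod p$. Consequently the density in question is just the classical density of primes congruent to $1$ modulo $p$, corrected by the removal of a finite set of primes, and no new input beyond the Chebotarev density theorem (as recalled in \S\ref{sec:distribution-of-fourier}) is needed.

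In detail, I would first recall that $\{\ell : \ell \equiv 1 \pmod p\}$ is the Chebotarev set attached to the cyclotomic extension $\Q(\zeta_p)/\Q$ together with the trivial (one-element) conjugacy class in $\Gal(\Q(\zeta_p)/\Q) \cong (\ZZ/p\ZZ)^\times$; equivalently, one may appeal directly to Dirichlet's theorem on primes in arithmetic progressions. Either way, this set has density $1/[\Q(\zeta_p):\Q] = 1/(p-1)$. Next I would observe that $\mathfrak{S}_3$ differs from this set only by the exclusion of the finitely many primes dividing $N$, and that a finite set of primes has density zero, so this exclusion does not change the limit. Hence $\lim_{x\to\infty} s_3(x)/\pi(x) = 1/(p-1)$.

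Alternatively, and in keeping with the bookkeeping of \S\ref{sec:distribution-of-fourier}, one can phrase the argument through the matrix group: up to the finite set of primes dividing $N$, the primes of Set~3 are exactly those with $\det \overline{\rho}_g(\Frob_\ell) \equiv 1 \pmod p$, i.e. with $\overline{\rho}_g(\Frob_\ell) \in \SL_2(\F_p)$, and
\[
\frac{\#\SL_2(\F_p)}{\#\GL_2(\F_p)} = \frac{p(p-1)(p+1)}{p(p-1)^2(p+1)} = \frac{1}{p-1},
\]
which recovers the same value via the Chebotarev density theorem. There is essentially no obstacle here; the only point meriting a word of care is the harmless exclusion of the primes dividing $N$, handled by the observation that finite sets of primes have density zero, exactly as in the proofs of Lemmas~\ref{box 1 count} and~\ref{box 2 count}.
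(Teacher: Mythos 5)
Your proof is correct and follows essentially the same route as the paper, which simply invokes Dirichlet's theorem on primes in arithmetic progressions (equivalently the Chebotarev density theorem for $\Q(\zeta_p)/\Q$) to get density $\frac{1}{p-1}$, the exclusion of the finitely many primes dividing $N$ being harmless. Your alternative phrasing via $\#\SL_2(\F_p)/\#\GL_2(\F_p)$ is a correct and consistent restatement, but adds nothing beyond the paper's one-line argument.
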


\begin{proof}
We need to count the number of primes satisfying the following two conditions:
\begin{enumerate}[label = (\roman*)]
\item $\ell \nmid N$;
\item $\ell \equiv 1 \pmod{p}$.
\end{enumerate}
 By Dirichlet's theorem on primes in arithmetic progressions, the density of such primes is exactly
\[
\frac{1}{p-1}.
\]
\end{proof}

%\begin{prop}
%\label{total Carayol}
%As $x\rightarrow\infty$ and $\ell$ varies over all the primes up to $x$, the minimum number of modular forms coming from \ref{Thm:Carayol} is
%\[
%\left(2^{\frac{2(p-3)\pi(x)}{(p-1)^2}} \times 3^{\frac{p\pi(x)}{(p-1)^2}} \right) - 1.
%\]
%\end{prop}

%\begin{proof}
%As $\ell$ varies over primes up to $x$, we calculated in \eqref{count from Carayol} above that the minimum number of modular forms is given by
%\[
%2^{s_1(x)} \times 3^{s_2(x)+s_3(x)} - 1.
%\]
%Plugging in the values of $s_1(x)$, $s_2(x)$ and $s_3(x)$ calculated in Lemmas~\ref{box 1 count}, \ref{box 2 count}, and \ref{box 3 count}, it follows that the required count is
%\begin{align*}
%\left(2^{\frac{2(p-3)\pi(x)}{(p-1)^2}} \times 3^{\frac{\pi(x)}{(p-1)} + \frac{\pi(x)}{(p-1)^2}} \right) - 1 = \left(2^{\frac{2(p-3)\pi(x)}{(p-1)^2}} \times 3^{\frac{p\pi(x)}{(p-1)^2}} \right) - 1.
%\end{align*}
%The result is now immediate.
%\end{proof}

\section{Local factors and the growth of \texorpdfstring{$\lambda$}{}-invariants}
\label{S: local factors and growth of lambda}
\subsection{Local factors}
\label{SS: local factors}
Retain the notation from Section~\ref{S: preliminaries}.
We have non-CM modular forms $g$ and $f$ of weight $2$ and levels $N$ and $M$, respectively, whose residual Galois representations are isomorphic.

\begin{Remark}\leavevmode
\begin{enumerate}
\item Despite our assumption that $\bar{\rho}_f$ takes values in $\GL_2(\F_p)$, $f$ need not correspond to an elliptic curve.
Recall the notation from $\S$~\ref{S:Selmer}.
In general, associated  to a weight $2$ modular form $f$ is an abelian variety over $\Q$ of degree $[K:\Q]$.
Thus, if $f \in \mathcal{H}(g)$, then there exists some prime $\mathfrak{p} \mid p$ with inertia degree $1$.

We explain this via an explicit example: let $g$ be the modular form corresponding to the elliptic curve $X_0(11)$.
Let $f \in S_2(\Gamma_0(143))$ be the modular form with LMDFB label \href{https://www.lmfdb.org/ModularForm/GL2/Q/holomorphic/143/2/a/b/}{143.2.a.b}.
This modular form has coefficients in the number field generated by the polynomial $x^4 - 4x^2 - x + 1$.
Let $\alpha$ denote a root of this polynomial.
The prime $\mathfrak{p}=(\alpha^3-4\alpha)$ lies above $3$ and has residue field isomorphic to $\F_3$.
Using the command \texttt{Reductions(f,3)} in Magma \cite{magma} and Sturm's Bound \cite{sturm}, one may compare the mod $3$ and mod $\mathfrak{p}$ $q$-expansions of $g$ and $f$, respectively, to verify that $f \in \mathcal{H}(g)$.
\item In a similar vein to the previous remark, recall that $f \in S_2(\Gamma_0(M),\omega)$.
By \cite[Theorem~A]{DT94}, it is possible that $f$ has nontrivial nebentypus even though $g \in S_2(\Gamma_0(N))$.
However, writing $\bar{\omega}$ for the $\pmod{p}$ reduction of $\omega$, we see that since $\bar{\rho}_f \simeq \bar{\rho}_g$, we must have that
\[
\det \bar{\rho}_f \simeq \chi_p \bar{\omega} \simeq \chi_p.
\]
Thus $\bar{\omega}$ is trivial and will not play a role in the constant term of $P_\ell(X)$.
\end{enumerate}
\end{Remark}

Recall that we assume $\overline{\rho}_{g}$ is irreducible; in this case, it is conjectured that $\mu(g)=0$ (e.g. when $g$ corresponds to an elliptic curve, this is Greenberg's Conjecture \cite[Conjecture 1.11]{Greenberg}).
In light of this well-known conjecture, we will henceforth assume\footnote{We remark that by \cite[Theorem~3.7]{KR21}, this is the generic case when $g$ corresponds to an elliptic curve $\EC_{/\Q}$.} the following
\begin{equation}
\tag*{\textup{\textbf{(Hyp~$\mu$)}}}\label{assmpn: mu}
\text{ The } \mu\text{-invariant associated to the Selmer group of } g \text{ is }0.
\end{equation}
Then by \cite[Theorem~2 and Theorem~4.3.3]{epw} (in the ordinary case) or \cite[Corollary 2.10 and Theorem~4.6]{hatleylei2019} (in the non-ordinary case)  we have that $\mu(f)=0$ and
\begin{equation}
\label{EPW thm 2}
\lambda(g) + \sum_{\ell \mid M} \delta(g,\ell) = \lambda(f) + \sum_{\ell \mid M} \delta(f,\ell),
\end{equation}
where the local factors $\delta$ are given by the size of some local Galois cohomology groups, as described in \cite[Proposition~2.4]{greenbergvatsal}.
More precisely, $\delta(\cdot,\ell)=s_\ell d_\ell$ where $s_\ell$ is the largest power of $p$ dividing $\frac{\ell^{p-1}-1}{p}$ (equivalently, it is the number of primes $\eta$ lying above $\ell$ in $\Q_{\cyc}$) and $d_\ell$ is described below.
In particular, the value of $s_\ell$ is the same for both $g$ and $f$.

To describe the value of $d_\ell$, we use the explanation given in \cite[Proposition~2.4]{greenbergvatsal} (see also \cite[\S~5.2]{epw} and \cite[\S~5]{hatleylei2019}).
We have translated their result to be consistent with the normalization of Frobenius characteristic polynomials that we are using in the rest of the paper.

Let $P_\ell(X)$ denote the reduction $\pmod{p}$ of the characteristic polynomial of $\Frob_\ell$ on $(V_f)_{I_\ell}$.
Then $d_\ell$ is the multiplicity of $1$ as a root of $P_\ell(X) \pmod{p}$.
In particular, $\delta(\cdot, \ell)>0$ precisely when $P_\ell(1)=0$.

When $\ell \mid M$, the corresponding characteristic polynomial is
\[
P_\ell(X)=\ell-a_\ell(f)X.
\]
Thus $\delta(f,\ell)>0$ if and only if $a_\ell(f) \equiv \ell$ mod $p$.
Note that in this case, $1$ is necessarily a simple root, so $d_\ell(f)=1$.
We thus have the following simple description: for $\ell \mid M$, we have
\begin{align*}
\delta(f,\ell)>0 &\Longleftrightarrow  d_{\ell} = 1 \\
&\Longleftrightarrow  \delta(f,\ell)=s_\ell \\
&\Longleftrightarrow a_\ell(f) \equiv \ell \pmod{p}. \numberthis \label{eqn:local-at-bad-primes}
\end{align*}

\begin{Remark}
We point out that, by the Local Langlands correspondence (see e.g. \cite[Proposition~2.8(2)]{LW}, the matrix-counting methods of Section~\ref{sec:distribution-of-fourier} still apply for these ramified primes.
\end{Remark}

Now we consider the local factors for $g$.
If $\ell \mid N$, then the analysis is identical to the above.
Otherwise, $\ell \nmid N$ and the characteristic polynomial is
\[
P_\ell(X)=\ell - a_\ell(g)X + X^2.
\]
This implies that
\begin{itemize}
\item $1$ is a root if and only if $1+\ell \equiv a_\ell(g) \pmod{p}$, and
\item $1$ is a \emph{simple} root if and only if $1+\ell \equiv a_\ell(g) \pmod{p}$ and $\ell \not\equiv 1 \pmod{p}$.
% follows from product of roots
\end{itemize}

Thus, rewriting \eqref{EPW thm 2} gives
\begin{equation}\label{eq:compare-lambdas}
\lambda(g) =
\lambda(f) + \sum_{\ell \mid M}[\delta(f,\ell)-\delta(g,\ell)].
\end{equation}
Since $g$ is fixed, both sides of the above equation are constant as we vary $f$.
In particular,
\begin{equation}
\label{eqn: strict inequality}
\lambda(f)>\lambda(g) \Longleftrightarrow \sum_{\ell \mid M}\delta(g,\ell) > \sum_{\ell \mid M}\delta(f,\ell).
\end{equation}

\subsection{How often is \texorpdfstring{$\lambda(f)=\lambda(g)$}{}?}\label{subsec:equal-lambda}
The main goal of this section is to study the stability of the $\lambda$-invariant among the modular forms in $\mathcal{H}(g)$ using the explicit description of their levels provided by \ref{Thm:Carayol} and \cite[Theorem~A]{DT94}.
For the sake of simplicity, we make the following assumption.
\begin{equation}\tag*{\textup{\textbf{(Hyp~min)}}}\label{assmpn: minimal lambda for E}
\begin{minipage}{0.85\textwidth}
$\lambda(g)$ is minimal among all $\lambda(f)$  for $f \in \mathcal{H}(g)$.
\end{minipage}
\end{equation}

\begin{Remark}
This condition is satisfied for example when $\lambda(g)=0$, which is the generic case when $g$ corresponds to an elliptic curve $\EC_{/\Q}$ with Mordell--Weil rank equal to $0$ (cf \cite[Theorem~3.7]{KR21}).
It is also satisfied when $g$ is $p$-ordinary by \cite[Corollary 2]{epw}.
\end{Remark}

\begin{Lemma}
\label{lemma: delta equals 0 implies lower bound}
Let $g \in S_2(\Gamma_0(N))$ be a newform satisfying \ref{assmpn: optimal} and that $\mu(g)=0$.
Let $M$ be an integer arising from \ref{Thm:Carayol} and $f \in \mathcal{H}(g)$ be an associated modular form.
If $\delta(g,\ell) =0$ for every $\ell\mid M$, then $\lambda(f)\leq \lambda(g)$.
Further, if $g$ satisfies \ref{assmpn: minimal lambda for E}, then $\lambda(f)= \lambda(g)$.
\end{Lemma}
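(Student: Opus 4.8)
The plan is to extract everything from the master formula \eqref{eq:compare-lambdas}, namely
\[
\lambda(g) = \lambda(f) + \sum_{\ell \mid M}\bigl[\delta(f,\ell)-\delta(g,\ell)\bigr],
\]
which is available to us under the standing hypotheses \ref{assmpn: optimal}, \ref{assmpn: cotorsion}, \ref{assmpn: mu} (here phrased as $\mu(g)=0$). First I would note that the hypothesis $\delta(g,\ell)=0$ for every $\ell \mid M$ kills the negative part of the sum, so the formula collapses to
\[
\lambda(g) = \lambda(f) + \sum_{\ell \mid M}\delta(f,\ell).
\]
Since each local factor $\delta(f,\ell)=s_\ell d_\ell$ is a product of non-negative integers (indeed $s_\ell \geq 1$ and $d_\ell \geq 0$, with $d_\ell$ the multiplicity of $1$ as a root of $P_\ell(X)\bmod p$), the sum $\sum_{\ell\mid M}\delta(f,\ell)$ is $\geq 0$. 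Hence $\lambda(f) = \lambda(g) - \sum_{\ell\mid M}\delta(f,\ell) \leq \lambda(g)$, which is the first assertion.

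For the second assertion, I would invoke \ref{assmpn: minimal lambda for E}, which says $\lambda(g) \leq \lambda(f')$ for every $f' \in \mathcal{H}(g)$, in particular for the specific $f$ at hand. Combining $\lambda(f)\leq\lambda(g)$ (just proved) with $\lambda(g)\leq\lambda(f)$ (from \ref{assmpn: minimal lambda for E}) forces $\lambda(f)=\lambda(g)$. One small point to address for cleanliness: the lemma's statement of \ref{assmpn: optimal} and $\mu(g)=0$ must be enough to guarantee that $f$ also satisfies the hypotheses needed to run \eqref{eq:compare-lambdas} — but this is recorded earlier in the excerpt (if $f\in\mathcal H(g)$ then $f$ satisfies \ref{assmpn: optimal}, and \cite{epw}/\cite{hatleylei2019} give $\mu(f)=0$ together with the displayed identity), so no extra work is required.

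There is essentially no obstacle here: the content of the lemma is entirely front-loaded into the formula \eqref{eq:compare-lambdas} and the elementary observation that $\delta$-factors are non-negative. The only thing one must be slightly careful about is bookkeeping in the non-ordinary case, where $\lambda$ secretly means $\lambda^{\pm}$ and the formula \eqref{eq:compare-lambdas} must be read componentwise in the sign; but since, as remarked in \S\ref{Ss:nonord}, the argument is insensitive to the sign and we have suppressed it, the same two-line deduction applies verbatim to each of $\lambda^+$ and $\lambda^-$. I would therefore present the proof in just these two short steps — collapse the sum using $\delta(g,\ell)=0$, deduce $\lambda(f)\leq\lambda(g)$ from non-negativity, then sandwich with \ref{assmpn: minimal lambda for E}.
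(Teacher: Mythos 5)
Your proof is correct and follows essentially the same route as the paper: both collapse \eqref{eq:compare-lambdas} using $\delta(g,\ell)=0$, deduce $\lambda(f)\leq\lambda(g)$ from non-negativity of the $\delta(f,\ell)$, and then sandwich with \ref{assmpn: minimal lambda for E}. The paper's proof is just a terser version of the same two-step argument.
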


\begin{proof}
The first statement follows from \eqref{eq:compare-lambdas} since the terms involved on both sides of the equation are non-negative.
The second assertion follows from the minimality hypothesis which can be rephrased as $\lambda(f)\geq \lambda(g)$.
\end{proof}

\begin{Remark}\label{rmk:how-many-modular-forms-at-M}
The above lemma implies that if we count how often $\sum \delta(g,\ell)=0$, we can obtain a lower bound on the frequency that $\lambda(f)\leq \lambda(g)$.
Further, if \ref{assmpn: minimal lambda for E} holds, then it ensures that we can obtain a lower bound on the frequency that $\lambda(f)= \lambda(g)$.
To get a better lower bound, one would be tempted to count how frequently $\delta(g,\ell)=\delta(f,\ell)$ for all $\ell \mid M$.
However, this appears to be a hard question for the following two reasons:

Suppose that we are in the simplest setting that $\lambda(g)=0$.
In the calculations that follow, we will show how to count the number of primes for which $\delta(g,\ell)=0$.
 Let $\ell$ be a prime number that can arise as a divisor of an integer $M$ in \ref{Thm:Carayol} and suppose that $\delta(g,\ell)=0$.
Now, \cite[Theorem~A]{DT94} asserts that corresponding to each $M$ in \ref{Thm:Carayol} there is at least one newform $f$ such that $f \in \mathcal{H}(g)$.
However, as per the knowledge of the authors, it is currently not known how many $f$ such correspond to each $M$.
Therefore, it appears to be impossible to count the number of modular forms $f$ for which $\delta(f,\ell)=0$.
For results related to this question, see \cite[Proposition~5.1]{GreenbergStevens} and the introduction to \cite{Ravi}.

As a consolation, one might want to satisfy themselves by counting the number of levels $M$ in \ref{Thm:Carayol} so that there is at least one modular form $f$ satisfying the required property.
The difficulty here is that knowing the precise value of $\delta(f,\ell)$ (or $d_{\ell}(f)$) requires us to know congruence relations satisfied by $a_{\ell}(f)\pmod{p}$.
This is generally done via some matrix calculations.
The calculations we performed in Section~\ref{sec:distribution-of-fourier} allowed us count the number of primes $\ell$ with desired properties for a fixed $f$ but it does not seem to be possible to count the number of $f$ satisfying the required properties for fixed $\ell$.
\end{Remark}

A prime $\ell$ which divides the level of a modular form $f$ arising from \ref{Thm:Carayol} is of two kinds: either it divides $N$ or it does not.
We need to treat these primes separately, as we do below.

\subsubsection*{Case \emph{I}: When $\ell\mid N$}
From the discussion above, we know that
\[
\delta(g,\ell)=0 \text{ if and only if } a_\ell(g)\not\equiv \ell \pmod{p}
\]
For the remainder of this section, we make the following assumption at the finitely many primes which divide $N$.
\begin{equation}\tag*{\textup{\textbf{(Hyp~bad)}}}\label{assmpn: bad reduction of E}
\begin{minipage}{0.85\textwidth}
$d_{\ell}(g)=0$ for all primes $\ell\mid N$.
\end{minipage}
\end{equation}

This condition can be understood easily from our discussion in \S~\ref{SS: local factors}, as we now recall.
\begin{Lemma}
\label{lem:local-split-nonsplit}
Let $g \in S_2(\Gamma_0(N))$ and let $\ell$ be a prime such that $\ell \mid N$.
Then $d_{\ell}(g)=1$ if and only if either of the following two conditions hold:
\begin{enumerate}[label = \textup(\roman*\textup)]
\item $a_\ell(g) =1$  and $\ell \equiv 1 \pmod{p}$.
\item $a_\ell(g) = -1$ and $\ell \equiv -1 \pmod{p}$.
\end{enumerate}
\end{Lemma}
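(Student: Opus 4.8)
The plan is to unwind the definition of $d_\ell(g)$ for a prime $\ell \mid N$ and read off exactly when $1$ is a root (of multiplicity one) of the relevant characteristic polynomial. Recall from \S~\ref{SS: local factors} that when $\ell \mid M$ (and the same analysis applies to $\ell \mid N$ for the form $g$), the characteristic polynomial of $\Frob_\ell$ on $(V_g)_{I_\ell}$ reduces $\pmod p$ to
\[
P_\ell(X) = \ell - a_\ell(g) X,
\]
a polynomial of degree $\leq 1$, and $d_\ell(g)$ is the multiplicity of $1$ as a root. The key structural observation, already recorded in \eqref{eqn:local-at-bad-primes}, is that $d_\ell(g) \in \{0,1\}$: it equals $1$ exactly when $a_\ell(g) \equiv \ell \pmod p$ (in which case $1$ is automatically a simple root since $P_\ell$ has degree $1$), and $0$ otherwise.

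Next I would combine this with the fact that $\ell \mid N$ forces $a_\ell(g)$ to lie in a very restricted set. For a weight-$2$ newform $g$ of trivial nebentypus and level $N$, the $\ell$-th Hecke eigenvalue at a prime $\ell \mid N$ satisfies $a_\ell(g) \in \{0, \pm 1\}$ — it is $0$ when $\ell^2 \mid N$ (supercuspidal or ramified principal series at $\ell$), and $\pm 1$ when $\ell \pdiv N$ (Steinberg/special at $\ell$, twisted by an unramified quadratic character), by the theory of the local automorphic type at $\ell$ (Atkin--Lehner theory; see the description of $\mathcal{M}$ via \ref{Thm:Carayol} and \cite{DT94}). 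In the former case $a_\ell(g) = 0$, so $a_\ell(g) \equiv \ell \pmod p$ would force $p \mid \ell$, impossible; hence $d_\ell(g) = 0$. In the latter case $a_\ell(g) = \pm 1$, and $a_\ell(g) \equiv \ell \pmod p$ becomes precisely $\ell \equiv \pm 1 \pmod p$ with matching signs, which is exactly conditions (i) and (ii) of the statement.

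So the argument has three steps: (1) reduce $d_\ell(g) = 1$ to the congruence $a_\ell(g) \equiv \ell \pmod p$ via the explicit shape of $P_\ell(X)$ at $\ell \mid N$, citing \eqref{eqn:local-at-bad-primes}; (2) invoke the classification of possible values of $a_\ell(g)$ at a bad prime of a weight-$2$ trivial-nebentypus newform to restrict to $a_\ell(g) \in \{0, \pm 1\}$; (3) case-check that the congruence $a_\ell(g) \equiv \ell$ is solvable only when $a_\ell(g) = \pm 1$ and $\ell \equiv \pm 1 \pmod p$, yielding (i) and (ii), and conversely that each of (i), (ii) makes the congruence hold. I expect step (2) — pinning down precisely which values $a_\ell(g)$ can take, and in particular that $|a_\ell(g)| = 1$ in the Steinberg case — to be the only place requiring care; it follows from standard local representation theory of $\mathrm{GL}_2$ but one should cite it cleanly (e.g. via the local Langlands discussion already invoked in the remark after \eqref{eqn:local-at-bad-primes}, or directly from Atkin--Lehner theory). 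The rest is a short finite computation with no real obstacle.
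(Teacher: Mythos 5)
Your proposal is correct and follows essentially the same route as the paper, whose proof is simply the observation that the claim follows from \eqref{eqn:local-at-bad-primes}, i.e.\ from the equivalence $d_\ell(g)=1 \Leftrightarrow a_\ell(g)\equiv \ell \pmod{p}$. You additionally make explicit the Atkin--Lehner/local-Langlands fact that $a_\ell(g)\in\{0,\pm 1\}$ at a prime $\ell \mid N$ (with $a_\ell(g)=0$ when $\ell^2\mid N$), which the paper leaves implicit; that is a reasonable and harmless elaboration, not a different argument.
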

\begin{proof}
This follows immediately from \eqref{eqn:local-at-bad-primes}.
\end{proof}

\begin{Remark}
\label{rem:local-split-nonsplit ec}
When $g$ corresponds to an elliptic curve, this condition can be restated in terms of the reduction type at a prime $\ell\mid N$.
In this case, $d_{\ell}(g)=1$ if and only if either of the following two conditions hold:
\begin{enumerate}[label = \textup(\roman*\textup)]
\item $\ell$ is a prime of split multiplicative reduction and $\ell \equiv 1 \pmod{p}$.
\item $\ell$ is a prime of non-split multiplicative reduction and $\ell \equiv -1 \pmod{p}$.
\end{enumerate}
\end{Remark}

\subsubsection*{Case \emph{II}: When $\ell\nmid N$}
In this case, we know from the discussion above that $\delta(g,\ell)=0$ if and only if $a_\ell(g) \not\equiv \ell + 1 \pmod{p}$.
%want that P_\ell(1) is not a root.
We can divide this into two cases:
\begin{enumerate}
\item If $\ell \equiv -1 \pmod{p}$, then $\delta(g,\ell) =0$ provided $a_\ell(g) \not\equiv 0 \pmod{p}$.
But these are precisely the primes in Set~2', which is a \emph{discard set}, so these primes do not arise in our count.
\item If $\ell \not\equiv -1 \pmod{p}$, then we must count the primes from $\mathfrak{S}_1$ for which $a_\ell(g) \equiv -(\ell+1)\pmod{p}$ as well as the primes from $\mathfrak{S}_3$ for which $a_\ell(g) \not\equiv 2\pmod{p}$.
\end{enumerate}
Let $\mathfrak{S}_{1,\delta=0}$ denote the subset of primes in $\mathfrak{S}_1$ satisfying the additional property that $\delta(g,\ell)=0$.
%Write $\mathfrak{S}_{1,\delta=0}(x)$ to denote the set of primes up to $x$ and set the size of this set to be $s_{1,\delta=0}(x)$.

\begin{Lemma}
\label{eq:box-1-delta-zero}
With notation introduced above, the primes in $\mathfrak{S}_{1,\delta=0}$ have density
\[
\mathfrak{d}(\mathfrak{S}_{1,\delta=0}) = \frac{(p-3)}{(p-1)^2}.
\]
Moreover, if $p>3$ then
\[
\mathfrak{d_1} := \frac{\mathfrak{d}(\mathfrak{S}_{1,\delta=0})}{(\mathfrak{d}(\mathfrak{S}_{1})} = \frac{1}{2}.
\]

\end{Lemma}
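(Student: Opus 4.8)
The plan is to decompose $\mathfrak{S}_1$ according to the sign in the condition $a_\ell(g) \equiv \pm(\ell+1) \pmod p$, and then observe that the subset where $\delta(g,\ell) = 0$ is precisely the subset where the $-$ sign holds. Recall from Case II above that, for $\ell \nmid N$ with $\ell \not\equiv -1 \pmod p$, we have $\delta(g,\ell) = 0$ if and only if $a_\ell(g) \not\equiv \ell+1 \pmod p$. By definition, a prime in $\mathfrak{S}_1$ already satisfies $a_\ell(g) \equiv \pm(\ell+1) \pmod p$ and $\ell \not\equiv \pm 1 \pmod p$; so within $\mathfrak{S}_1$, the condition $\delta(g,\ell) = 0$ is equivalent to $a_\ell(g) \equiv -(\ell+1) \pmod p$ together with $a_\ell(g) \not\equiv \ell+1 \pmod p$. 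The second of these is automatic given the first, because $a_\ell(g) \equiv \ell+1 \equiv -(\ell+1)$ would force $2(\ell+1) \equiv 0$, i.e. $\ell \equiv -1 \pmod p$, which is excluded. Hence $\mathfrak{S}_{1,\delta=0}$ is exactly the set of primes $\ell \nmid N$ with $\ell \not\equiv \pm 1 \pmod p$ and $a_\ell(g) \equiv -(\ell+1) \pmod p$.

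Next I would compute the density of this set via Chebotarev, exactly as in the proof of Lemma \ref{box 1 count}. Translating into matrix-counting, $\mathfrak{S}_{1,\delta=0}$ corresponds (up to a finite set) to $\{\ell : \overline{\rho}_g(\Frob_\ell) \in C_{i,-(i+1)} \text{ for some } 2 \le i \le p-2\}$, using the dictionary $\det \overline{\rho}_g(\Frob_\ell) \equiv \ell$, $\trace \overline{\rho}_g(\Frob_\ell) \equiv a_\ell(g) \pmod p$, and the surjectivity of $\overline{\rho}_g$. Summing the densities of the associated Chebotarev sets $\cP_i^-(f,p)$ over $i = 2, \dots, p-2$ and invoking equation \eqref{C i - i+1 equation}, which gives $\#C_{i,-(i+1)} = p(p+1)$ for each such $i$, yields
\[
\mathfrak{d}(\mathfrak{S}_{1,\delta=0}) = \sum_{i=2}^{p-2} \frac{\#C_{i,-(i+1)}}{\#\GL_2(\F_p)} = \frac{(p-3)\,p(p+1)}{p(p-1)^2(p+1)} = \frac{p-3}{(p-1)^2},
\]
as claimed. (One should double-check that the index $i=1$, where $C_{1,-2}$ would lie in $\sN_1 \cup \sD_1$ rather than in a split class $\sD_{1,i}$, is correctly excluded — but it is, since $\ell \equiv 1 \pmod p$ is part of $\mathfrak{S}_3$, not $\mathfrak{S}_1$.)

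For the final claim, I would simply divide by $\mathfrak{d}(\mathfrak{S}_1) = \frac{2(p-3)}{(p-1)^2}$ from Lemma \ref{box 1 count}, obtaining $\mathfrak{d}_1 = \frac{1}{2}$ whenever $p > 3$ (so that the denominator is nonzero; note $p-3 > 0$ is exactly the condition that $\mathfrak{S}_1$ is non-empty in density). I do not anticipate a serious obstacle here; the only subtle point is the equivalence established in the first paragraph — namely that the $+$ and $-$ cases inside $\mathfrak{S}_1$ are genuinely disjoint and that $\delta(g,\ell) = 0$ picks out exactly the $-$ case — so that the density of $\mathfrak{S}_{1,\delta=0}$ is precisely half that of $\mathfrak{S}_1$. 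This is where I would be most careful, since it hinges on $\ell \not\equiv -1 \pmod p$ ruling out the overlap $\ell+1 \equiv -(\ell+1)$.
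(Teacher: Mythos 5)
Your proof is correct and follows essentially the same approach as the paper: identify $\mathfrak{S}_{1,\delta=0}$ as the sub-set of $\mathfrak{S}_1$ where $a_\ell(g) \equiv -(\ell+1) \pmod{p}$, translate it via Chebotarev into a sum of the quantities $\#C_{i,-(i+1)}/\#\GL_2(\F_p)$ over $i = 2,\dots,p-2$, and invoke \eqref{C i - i+1 equation}. Your explicit verification that the $+$ and $-$ cases inside $\mathfrak{S}_1$ are disjoint (which rests on $\ell \not\equiv -1 \pmod p$) is a point the paper leaves implicit, and you are right that it is the crux of why $\mathfrak{d}_1 = 1/2$; note also that you correctly use the value $\mathfrak{d}(\mathfrak{S}_1) = 2(p-3)/(p-1)^2$ computed in the \emph{proof} of Lemma~\ref{box 1 count} (the exponent $3$ in that lemma's displayed statement is a typo). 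The only slip is in your parenthetical: the matrices in $C_{1,-2}$ have repeated eigenvalue $-1$, so they lie in $\sN_{-1}\cup\sD_{-1}$ rather than $\sN_1\cup\sD_1$, but since $i=1$ is excluded from the range anyway this does not affect the argument.
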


\begin{proof}
To first assertion follows from the above discussion and repeating the same argument used to establish Lemma~\ref{box 1 count}.
In particular,
\[
\mathfrak{d}(\mathfrak{S}_{1,\delta=0}) =\left(\sum_{\ell=2}^{p-2} \frac{\# C_{\ell,-(\ell+1)}}{\# \GL_2(\F_p)}\right) \pi(x) = \frac{(p-3)}{(p-1)^2}.
\]
%Note that while counting we omit $ \ell \equiv \pm 1 \pmod{p}$.
The second assertion is immediate.
\end{proof}

The above calculation can be summarized as follows:
let $\ell$ be a prime in $\mathfrak{S}_{1,\delta=0}$; this is a \emph{positive density} subset of $\mathfrak{S}_1$ (when $p>3$).
By \ref{Thm:Carayol} and \cite[Theorem~A]{DT94}, the prime $\ell$ divides (infinitely many) integers $M$, each of which is the level of at least one modular form $f$ belonging to $\mathcal{H}(g)$.
However, this prime $\ell$ does not contribute to the $\lambda$-invariant growth of $f$ (relative to the $\lambda$-invariant of $g$).
Now we compute the density of primes from $\mathfrak{S}_3$ for which $\delta(g,\ell)=0$.
Denote this set by $\mathfrak{S}_{3,\delta=0}$.
For this, we must consider the primes $\ell \leq x$ such that $\ell \equiv 1 \pmod{p}$ and $a_\ell(g)\not\equiv 2 \pmod{p}$.
By \cite[Proposition~4.6]{G-FP} this density is given by
\begin{equation}
\label{eq:box-3-delta-zero}
\mathfrak{d}(\mathfrak{S}_{3,\delta=0}) = \frac{(p^2-p-1)}{(p-1)(p^2-1)}.
\end{equation}
Thus, analogous to what we have seen above, there is a subset of $\mathfrak{S}_3$ with density $\mathfrak{d}_3= \frac{p^2-p-1}{p^2-1}$ such that any prime $\ell$ in this subset provides no local contribution to the (relative) $\lambda$-invariant growth of a modular form $f \in \mathcal{H}(g)$.

By Lemma~\ref{eq:box-1-delta-zero} and \eqref{eq:box-3-delta-zero}, we find that the density of primes $\ell$ appearing in \ref{Thm:Carayol} for which $\delta(g,\ell)=0$ is given by
\[
\frac{(p-3)}{(p-1)^2} + \frac{(p^2-p-1)}{(p-1)(p^2-1)} = \frac{(2p^2-3p-4)}{(p-1)^2(p+1)}.
\]
We can now state and prove the main theorem of this section.

\begin{Th}
\label{thm: reformulation of lambda f equal lambda E case}
Let $g \in S_2(\Gamma_0(N))$ be a newform satisfying \ref{assmpn: optimal}.
Suppose that $\mu(g)=0$ and that assumptions \ref{assmpn: minimal lambda for E} and \ref{assmpn: bad reduction of E} are satisfied.
Then, there exists a set of prime numbers $\mathcal{R}_1$ of density $\frac{(p-3)}{(p-1)^2}$ and another set of prime numbers $\mathcal{R}_2$ of density $\frac{p^2-p-1}{(p-1)(p^2-1)}$ such that for every integer $M$ of the form
\begin{equation}
\label{eqn: form of M}
M = N \prod_{\ell_{1}\in \mathcal{R}_1} \ell_{1}^{\alpha(\ell_1)}\prod_{\ell_{2}\in \mathcal{R}_2} \ell_{2}^{\alpha(\ell_2)} \text{ where } 0\leq \alpha(\ell_1)\leq 1 \text{ and } 0\leq \alpha(\ell_2)\leq 2,
\end{equation}
there exists a modular form $f \in \mathcal{H}(g)$ of level $M$ with $\lambda(g)=\lambda(f)$.
\end{Th}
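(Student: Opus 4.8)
The plan is to take $\mathcal R_1 := \mathfrak S_{1,\delta=0}$ and $\mathcal R_2 := \mathfrak S_{3,\delta=0}$, the subsets of Set~1 and Set~3 on which $\delta(g,\ell)=0$, whose densities are exactly $\frac{(p-3)}{(p-1)^2}$ and $\frac{p^2-p-1}{(p-1)(p^2-1)}$ by Lemma~\ref{eq:box-1-delta-zero} and \eqref{eq:box-3-delta-zero}. I would first record that these two sets are disjoint and contain no prime dividing $N$: every prime of $\mathfrak S_1$ satisfies $\ell\not\equiv\pm1\pmod p$ while every prime of $\mathfrak S_3$ satisfies $\ell\equiv1\pmod p$, and both sets consist of primes $\ell\nmid N$ by construction. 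Hence any $M$ as in \eqref{eqn: form of M} is genuinely $N$ times a product of distinct primes, all coprime to $N$.

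Next I would verify that each such $M$ actually arises from \ref{Thm:Carayol}. A prime $\ell_1\in\mathcal R_1\subseteq\mathfrak S_1$ satisfies $\ell_1\nmid N$, $\ell_1\not\equiv\pm1\pmod p$ and $a_{\ell_1}(g)\equiv\pm(1+\ell_1)\pmod p$; by the simplification of \ref{Thm:Carayol} carried out in \S~\ref{reformulating carayol}, this is precisely case~(1), for which the admissible exponents are $\alpha(\ell_1)\in\{0,1\}$. A prime $\ell_2\in\mathcal R_2\subseteq\mathfrak S_3$ satisfies $\ell_2\nmid N$ and $\ell_2\equiv1\pmod p$, which is case~(3): subcase (a) with $\alpha(\ell_2)=2$ or subcase (b) with $\alpha(\ell_2)=1$, so the admissible exponents are $\alpha(\ell_2)\in\{0,1,2\}$. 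Thus every $M$ of the form \eqref{eqn: form of M} satisfies the conclusion of \ref{Thm:Carayol}, and \cite[Theorem~A]{DT94}, applied iteratively one prime at a time, produces at least one newform $f\in\mathcal H(g)$ of level exactly $M$.

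It then remains to show $\lambda(f)=\lambda(g)$, and here I would invoke Lemma~\ref{lemma: delta equals 0 implies lower bound}: it suffices to check $\delta(g,\ell)=0$ for every $\ell\mid M$. There are three kinds of such primes. If $\ell\mid N$, then $d_\ell(g)=0$ by \ref{assmpn: bad reduction of E}, so $\delta(g,\ell)=0$. If $\ell=\ell_1\in\mathcal R_1=\mathfrak S_{1,\delta=0}$, then $\delta(g,\ell_1)=0$ by definition of that set. If $\ell=\ell_2\in\mathcal R_2=\mathfrak S_{3,\delta=0}$, then $\ell_2\equiv1\pmod p$ and $a_{\ell_2}(g)\not\equiv2\pmod p$, so that $P_{\ell_2}(1)=(\ell_2+1)-a_{\ell_2}(g)\equiv 2-a_{\ell_2}(g)\not\equiv0\pmod p$, forcing $\delta(g,\ell_2)=0$. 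Hence $\sum_{\ell\mid M}\delta(g,\ell)=0$, and Lemma~\ref{lemma: delta equals 0 implies lower bound} together with \ref{assmpn: minimal lambda for E} gives $\lambda(f)=\lambda(g)$.

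I expect the only delicate point to be the bookkeeping that matches the shape of $M$ in \eqref{eqn: form of M} against the case list of \ref{Thm:Carayol}---in particular confirming that the exponent ranges $0\le\alpha(\ell_1)\le1$ and $0\le\alpha(\ell_2)\le2$ are exactly the ones allowed---and the iterative use of \cite[Theorem~A]{DT94}, which formally requires that the levels obtained by peeling off one prime at a time still satisfy the Diamond--Taylor hypotheses; since these are congruence conditions on $\trace\overline{\rho}_g(\Frob_\ell)$ and $\det\overline{\rho}_g(\Frob_\ell)$ that do not change along the way, this is routine but should be spelled out.
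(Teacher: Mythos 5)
Your proposal is correct and follows essentially the same route as the paper: it chooses $\mathcal{R}_1=\mathfrak{S}_{1,\delta=0}$ and $\mathcal{R}_2=\mathfrak{S}_{3,\delta=0}$, invokes the density computations of Lemma~\ref{eq:box-1-delta-zero} and \eqref{eq:box-3-delta-zero}, realizes each $M$ of the form \eqref{eqn: form of M} as a level via the discussion in \S~\ref{reformulating carayol} together with \cite[Theorem~A]{DT94}, and concludes with Lemma~\ref{lemma: delta equals 0 implies lower bound} plus \ref{assmpn: minimal lambda for E} and \ref{assmpn: bad reduction of E}. The extra bookkeeping you supply (matching the exponent ranges to the cases of \ref{Thm:Carayol} and the iterative use of Diamond--Taylor) is just a more explicit rendering of steps the paper leaves implicit.
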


\begin{proof}
Since we assume that \ref{assmpn: bad reduction of E} holds, for every prime $\ell\mid N$ we have that $\delta(g,\ell)=0$.

Choose $\mathcal{R}_1$ (resp. $\mathcal{R}_2$) to be the set of primes $\mathfrak{S}_{1,\delta=0}$ (resp. $\mathfrak{S}_{3,\delta=0}$) described above.
Our calculations above show that
\begin{align*}\frak{d}(\mathcal{R}_1)= \mathfrak{d}(\mathfrak{S}_{1,\delta=0})  &= \frac{p-3}{(p-1)^2}\\
\frak{d}(\mathcal{R}_2) = \mathfrak{d}(\mathfrak{S}_{3,\delta=0}) &= \frac{p^2-p-1}{(p-1)(p^2-1)}.
\end{align*}
Since the chosen set $\mathcal{R}_1$ (resp. $\mathcal{R}_2$) is a subset of $\mathfrak{S}_1$ (resp. $\mathfrak{S}_3$), we know from the discussion in Section~\ref{reformulating carayol} combined with \cite[Theorem~A]{DT94} that every integer of the form \eqref{eqn: form of M} arises as the level of a modular form $f \in \mathcal{H}(g)$.
By construction, each $\ell$ which is a divisor of $\frac{M}{N}$ satisfies the condition that $\delta(g,\ell)=0$.
The result follows from Lemma~\ref{lemma: delta equals 0 implies lower bound}.
\end{proof}

We will now illustrate the above result with a concrete example.
\begin{Example}
\label{Example: 11a1}
Consider the modular form $g_\EC$ associated to the elliptic curve $\EC=$\href{https://www.lmfdb.org/EllipticCurve/Q/11/a/2}{$11a1$} (Cremona label).
We know that $\EC$ has Mordell-Weil rank 0.
\cite[Theorem~3.7]{KR21} asserts that for $100\%$ of primes $p$, the corresponding Iwasawa invariants, namely $\lambda(g_\EC)= \mu(g_E)=0$.
So, \ref{assmpn: minimal lambda for E} is satisfied for $100\%$ of the primes $p$.

Note that $\EC$ has conductor $N=11$ and $\ell=11$ is a prime of split multiplicative reduction.
By Lemma~\ref{lem:local-split-nonsplit}, for all $p>11$ we have that $d_{\ell}(g_\EC)=0$.
In particular, \ref{assmpn: bad reduction of E} holds for all $p>11$.
In fact, the only primes for which \ref{assmpn: bad reduction of E} does not hold are $p=2$ and $5$; furthermore, when $p=2$, $\EC$ has good supersingular reduction, and when $p=5$ it is known that $\mu(g_\EC)=1$.

Thus, Theorem~\ref{thm: reformulation of lambda f equal lambda E case} holds for $100\%$ of primes $p$.
\end{Example}

%The discussion above also allows us to give an asymptotic count.

%\begin{Th}\label{thm:hyp-min}
%Let $g \in S_2(\Gamma_0(N))$ be a newform satisfying \ref{assmpn: optimal}.
%%Suppose that $\mu(g)=0$, and that assumptions \ref{assmpn: minimal lambda for E} and \ref{assmpn: bad reduction of E} are satisfied.
%As $x\rightarrow \infty$, the number of integers $M$ which arise in \ref{Thm:Carayol} such that there exists a modular form $f \in \mathcal{H}(g)$ of level $M$ with $\lambda(g) = \lambda(f)$ is (asymptotically) at least
%\[
%2^{\frac{(p-3)\pi(x)}{(p-1)^2}}3^{\frac{(p^2-p-1)\pi(x)}{(p-1)(p^2-1)}}-1.
%\]
%\end{Th}

%\begin{proof}
%%As in the case of the previous result, we can get the desired count of $M$ by choosing divisors $\ell$ of $M$ either from the set $\mathfrak{S}_{1,\delta=0}$ or $\mathfrak{S}_{3,\delta=0}$.
%The assertion follows by using an argument identical to Proposition~\ref{total Carayol}.
%\end{proof}

\subsection{How often is \texorpdfstring{$\lambda(f)>\lambda(g)$}{}?}
In this section, we use \eqref{eqn: strict inequality} to study the frequency with which the $\lambda$-invariant is guaranteed to increase for $f \in \mathcal{H}(g)$.
Thus, contrary to our strategy in \S~\ref{subsec:equal-lambda}, we instead seek to maximize the local factors $\delta(g,\ell)$.

We begin by recording a useful lemma.
\begin{Lemma}
\label{useful lemma positive case}
Let $g \in S_2(\Gamma_0(N))$ be a newform satisfying \ref{assmpn: optimal}.
Let $M$ be an integer arising from \ref{Thm:Carayol} and $f\in \mathcal{H}(g)$ a modular form of level $M$.
If $\delta(g,\ell) =1$ for every $\ell\mid N$ and $\delta(g,\ell) =2$ for every $\ell\mid \frac{M}{N}$, then $\lambda(f)> \lambda(g)$.
\end{Lemma}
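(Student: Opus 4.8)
The plan is to feed the hypotheses directly into the two identities established in Section~\ref{SS: local factors}, namely the comparison formula \eqref{eq:compare-lambdas} and its strict-inequality reformulation \eqref{eqn: strict inequality}. Concretely, \eqref{eqn: strict inequality} tells us that $\lambda(f)>\lambda(g)$ holds precisely when $\sum_{\ell\mid M}\delta(g,\ell) > \sum_{\ell\mid M}\delta(f,\ell)$, so it suffices to exhibit an honest inequality between these two sums under the stated assumptions on $\delta(g,\ell)$.

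First I would split the sum $\sum_{\ell\mid M}$ according to whether $\ell\mid N$ or $\ell\mid \frac{M}{N}$. By hypothesis, $\delta(g,\ell)=1$ for every $\ell\mid N$ and $\delta(g,\ell)=2$ for every $\ell\mid\frac{M}{N}$; recall that by the discussion around \eqref{eqn:local-at-bad-primes} the value $\delta(\cdot,\ell)$ always equals $s_\ell\, d_\ell$ where $s_\ell$ depends only on $\ell$ (so it is the same for $g$ and $f$) and $d_\ell\in\{0,1,2\}$ is the multiplicity of $1$ as a root of the relevant $P_\ell(X)\pmod p$. Thus $\delta(g,\ell)$ being maximal for each relevant $\ell$ means $d_\ell(g)$ is as large as possible: $d_\ell(g)=1$ at the bad primes (where $P_\ell$ has degree $1$, so $1$ can only be a simple root) and $d_\ell(g)=2$ at the good primes dividing $\frac{M}{N}$, which forces $P_\ell(X)\equiv(X-1)^2\pmod p$.

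Next I would compare termwise with $\delta(f,\ell)$. For $\ell\mid N$: since $d_\ell(f)\le 1$ always (the characteristic polynomial at a prime dividing the level is linear, $P_\ell(X)=\ell - a_\ell(f)X$), and $s_\ell$ is common, we get $\delta(f,\ell)\le s_\ell = \delta(g,\ell)$. For $\ell\mid\frac{M}{N}$: again $\delta(f,\ell) = s_\ell d_\ell(f) \le 2 s_\ell = \delta(g,\ell)$, with equality only if $d_\ell(f)=2$, i.e.\ $P_\ell(X)\equiv(X-1)^2$ for $f$ as well. Summing over all $\ell\mid M$ yields $\sum_{\ell\mid M}\delta(f,\ell)\le\sum_{\ell\mid M}\delta(g,\ell)$, and by \eqref{eqn: strict inequality} it remains only to rule out equality, i.e.\ to produce at least one $\ell$ with $\delta(f,\ell)<\delta(g,\ell)$.

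The main obstacle is precisely this strictness: one must argue that $\sum_{\ell\mid M}\delta(f,\ell)$ cannot equal $\sum_{\ell\mid M}\delta(g,\ell)$. I expect the cleanest route is to observe that $M>N$ (since $M$ arises from \ref{Thm:Carayol} and the statement implicitly concerns a nontrivial level, $\frac{M}{N}$ having at least one prime factor), so there is at least one prime $\ell\mid\frac{M}{N}$; for such $\ell$ one has $d_\ell(f)\le 1$. Indeed, by Carayol's Theorem a prime $\ell\nmid N$ with $\alpha(\ell)>0$ is either of type $(1)$ with $\alpha(\ell)=1$, or has $\ell\equiv\pm1\pmod p$; in all these cases the exponent-raising that produces $\ell\mid M$ forces the local representation of $f$ at $\ell$ to be of Steinberg/principal-series type, so $P_\ell(X)$ for $f$ has degree at most $1$ and hence $d_\ell(f)\le 1 < 2 = d_\ell(g)$. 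This gives $\delta(f,\ell)\le s_\ell < 2s_\ell = \delta(g,\ell)$ at that prime, and combined with the termwise bounds above we conclude $\sum_{\ell\mid M}\delta(f,\ell) < \sum_{\ell\mid M}\delta(g,\ell)$, whence $\lambda(f)>\lambda(g)$ by \eqref{eqn: strict inequality}. I would double-check the edge case analysis at primes $\ell\equiv 1\pmod p$ against the tables in Section~\ref{S:Quantifying Carayol's Theorem}, since that is where $d_\ell$ can be large and the bookkeeping is most delicate.
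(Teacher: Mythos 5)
Your proposal is correct and follows essentially the same route as the paper: both reduce via \eqref{eqn: strict inequality} to the strict inequality $\sum_{\ell\mid M}\delta(g,\ell) > \sum_{\ell\mid M}\delta(f,\ell)$, which holds because $d_\ell(f)\leq 1$ at every $\ell\mid M$ (the polynomial $P_\ell(X)=\ell-a_\ell(f)X$ is linear) while $\delta(g,\ell)$ is maximal by hypothesis. Your write-up is simply more explicit than the paper's one-line proof, in particular in flagging that strictness needs at least one prime dividing $\frac{M}{N}$ (i.e.\ $M\neq N$, which the paper discards in Section~\ref{reformulating carayol}); the aside about Steinberg/principal-series types is unnecessary given the degree argument but harmless.
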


\begin{proof}
If $\delta(g,\ell) =1$ for every $\ell\mid N$ and $\delta(g,\ell) =2$ for every $\ell\mid \frac{M}{N}$, then
\[
\sum_{\ell\mid M}\delta(g,\ell) > \sum_{\ell\mid M}\delta(f,\ell).
\]
The lemma is immediate from \eqref{eqn: strict inequality}.
\end{proof}

For the remainder of this section, we consider modular forms $g \in S_2(\Gamma_0(N))$  satisfying the following condition:
\begin{equation}\tag*{\textup{\textbf{(Hyp~bad')}}}\label{assmpn: d=1 case}
\begin{minipage}{0.8\textwidth}
$d_{\ell}(g)=1$ for all primes $\ell\mid N$.
\end{minipage}
\end{equation}

Next, we focus on the primes $\ell\nmid N$.
We know from the discussion about simple roots in Section~\ref{SS: local factors} that $d_{\ell}(g)=2$ precisely when both of the following two conditions hold:
\begin{enumerate}
\item $\ell \equiv 1 \pmod{p}$.
\item $a_\ell(g) \equiv 1 + \ell \equiv 2 \pmod{p}$
\end{enumerate}
In Lemma~\ref{box 3 count}, we computed the density of primes in the set $\mathfrak{S}_3$, i.e., primes $\ell$ such that $\ell\equiv 1\pmod{p}$.
This is given by
\[
\frak{d}(\mathfrak{S}_3)=\frac{\pi(x)}{p-1}.
\]
Let us denote the subset of primes in $\mathfrak{S}_3$ for which $a_{\ell}\equiv 2\pmod{p}$ by $\mathfrak{S}_{3,d=2}(x)$.
The set $\mathfrak{S}_3$ can be split into two disjoints subsets; namely one where $a_{\ell}\not\equiv 2\pmod{2}$ and one where $a_{\ell}\equiv 2\pmod{2}$.
It follows from \eqref{eq:box-3-delta-zero} that
\begin{align*}
\frak{d}(\mathfrak{S}_{3,d=2}) :=\frak{d}(\mathfrak{S}_3) - \frak{d}(\mathfrak{S}_{3,\delta=0}) &= \frac{1}{p-1} - \frac{(p^2-p-1)}{(p-1)(p^2-1)}\\
& = \frac{p}{(p-1)(p^2-1)}.
\end{align*}
It is easy to see from the above discussion that there is a subset of $\mathfrak{S}_3$ with density $\mathfrak{d}'_3= \frac{p}{p^2-1}$ such that any prime $\ell$ in this subset provides \emph{positive} local contribution to the (relative) $\lambda$-invariant growth of a modular form $f\in \mathcal{H}(g)$.

We now state the main result of this section.
Since the proof is analogous to that of Theorem~\ref{thm: reformulation of lambda f equal lambda E case} we only provide a brief sketch of the proof.
\begin{Th}
\label{thm: reformulation of lambda positive}
Let $g \in S_2(\Gamma_0(N))$ be a newform satisfying \ref{assmpn: optimal} and \ref{assmpn: d=1 case}.
Assume that $\mu(g)=0$.
There exists a set of primes $\mathcal{R}$ of density $\frac{p}{(p-1)(p^2 -1)}$ such that for every integer $M$ of the form
\begin{equation}
\label{eqn: form of M lambda positive}
M = N\prod_{\ell\in \mathcal{R}} \ell^{\alpha(\ell)} \text{ where } 0\leq \alpha(\ell)\leq 2,
\end{equation}
there exists a modular form $f\in\mathcal{H}(g)$ of level $M$ with $\lambda(f)>\lambda(g)$.
\end{Th}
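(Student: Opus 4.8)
The plan is to run the argument of Theorem~\ref{thm: reformulation of lambda f equal lambda E case} in reverse: instead of selecting a set of primes that \emph{kills} the local factors $\delta(g,\ell)$, I would select one that makes them as large as possible, so that \eqref{eqn: strict inequality} is forced to be strict. Concretely, set $\mathcal{R} := \mathfrak{S}_{3,d=2}$, the primes $\ell\nmid N$ with $\ell\equiv 1\pmod p$ and $a_\ell(g)\equiv 2\pmod p$; the computation carried out just above the statement gives $\mathfrak{d}(\mathcal{R}) = \mathfrak{d}(\mathfrak{S}_{3,d=2}) = \frac{p}{(p-1)(p^2-1)}$, which is the asserted density. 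By the analysis of simple roots in \S\ref{SS: local factors}, every $\ell\in\mathcal{R}$ satisfies $d_\ell(g) = 2$.

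Next I would verify that each integer $M$ of the form \eqref{eqn: form of M lambda positive} genuinely occurs as the level of some $f \in \mathcal{H}(g)$ (recall from \S\ref{reformulating carayol} that we exclude the trivial choice $M=N$, so at least one $\alpha(\ell)$ is positive). Since $\mathcal{R}\subseteq \mathfrak{S}_3$, every prime $\ell \mid \frac{M}{N}$ falls under case (3) of \ref{Thm:Carayol}, which permits precisely the exponents $\alpha(\ell)\in\{0,1,2\}$ appearing in \eqref{eqn: form of M lambda positive}; hence $M$ is compatible with Carayol's Theorem, and applying \cite[Theorem~A]{DT94} iteratively — exactly as in \S\ref{reformulating carayol} — yields at least one newform $f \in \mathcal{H}(g)$ of level $M$. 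Such an $f$ satisfies \ref{assmpn: optimal}, and together with the blanket hypothesis \ref{assmpn: cotorsion} and the assumption $\mu(g)=0$ the results of \cite{epw,hatleylei2019} apply: $\mu(f)=0$ and the identities \eqref{EPW thm 2}, \eqref{eq:compare-lambdas} and \eqref{eqn: strict inequality} are available.

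Finally I would compare the local terms. For every $\ell \mid M$ the prime $\ell$ is ramified in $f$, so $P_\ell(X) = \ell - a_\ell(f)X$ has degree $\le 1$ and hence $d_\ell(f)\le 1$. On the other hand \ref{assmpn: d=1 case} gives $d_\ell(g)=1$ for the finitely many $\ell\mid N$, while $d_\ell(g)=2$ for every $\ell\mid \frac{M}{N}$ by the choice of $\mathcal{R}$; note also that the primes dividing $N$ and those dividing $\frac{M}{N}$ are disjoint. Since the factor $s_\ell$ depends only on $\ell$ (not on $f$ or $g$), this gives
\[
\sum_{\ell\mid M}\bigl[\delta(g,\ell)-\delta(f,\ell)\bigr] \;=\; \sum_{\ell\mid M} s_\ell\bigl[d_\ell(g)-d_\ell(f)\bigr] \;\ge\; \sum_{\ell\mid M/N} s_\ell \;\ge\; 1 \;>\;0,
\]
so $\sum_{\ell\mid M}\delta(g,\ell) > \sum_{\ell\mid M}\delta(f,\ell)$ and \eqref{eqn: strict inequality} yields $\lambda(f)>\lambda(g)$; this is essentially the content of Lemma~\ref{useful lemma positive case}.

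The argument is bookkeeping once $\mathfrak{d}(\mathfrak{S}_{3,d=2})$ is in hand, and I expect the only delicate point to be the middle paragraph: one must check that restricting $\mathcal{R}$ to lie inside $\mathfrak{S}_3$ simultaneously (i) keeps the prescribed exponent ranges $0\le\alpha(\ell)\le 2$ legal in \ref{Thm:Carayol}, and (ii) retains enough control on $a_\ell(g)\bmod p$ to guarantee $d_\ell(g)=2$ — this is precisely why $\mathcal{R}$ cannot be enlarged beyond $\mathfrak{S}_{3,d=2}$.
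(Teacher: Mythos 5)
Your proposal is correct and follows essentially the same route as the paper: take $\mathcal{R}=\mathfrak{S}_{3,d=2}$ with the computed density, invoke \ref{Thm:Carayol} together with \cite[Theorem~A]{DT94} to realize each $M$ as a level in $\mathcal{H}(g)$, and conclude via the strict inequality of Lemma~\ref{useful lemma positive case}. Your extra bookkeeping (excluding $M=N$, writing $\delta=s_\ell d_\ell$ explicitly, and noting $d_\ell(f)\le 1$ for $\ell\mid M$) only makes explicit what the paper's shorter proof leaves implicit.
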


\begin{proof}
\ref{assmpn: d=1 case} implies that for every prime $\ell\mid N$, we have $\delta(g,\ell)=1$.

Next, we choose $\mathcal{R}$ to be the set of primes $\mathfrak{S}_{3,d=2}$ satisfying
\[\frak{d}(\mathfrak{S}_{3,d=2}) = \frac{p}{(p-1)(p^2-1)}.
\]
By \cite[Theorem~A]{DT94}, every integer of the form \eqref{eqn: form of M lambda positive} is the level of a modular form $f \in \mathcal{H}(g)$.
Each $\ell\mid \frac{M}{N}$ satisfies the additional condition that $\delta(g,\ell)=2$.
The result follows from Lemma~\ref{useful lemma positive case}.
\end{proof}

%The above discussion once again allows us to give an asymptotic count which we record below without proof.
%\begin{Th}
%\label{thm:lambda positive}
%Let $g \in S_2(\Gamma_0(N))$ be a newform satisfying \ref{assmpn: optimal} and \ref{assmpn: d=1 case}.
%As $x\rightarrow \infty$, the number of integers $M$ which arise in \ref{Thm:Carayol} such that there exists a modular form $f \in \mathcal{H}(g)$ of level $M$ with $\lambda(f) > \lambda(g)$ is (asymptotically) at least
%\[
%3^{\frac{p\pi(x)}{(p-1)(p^2-1)}} -1.
%\]
%\end{Th}

We now illustrate our results with concrete examples

\begin{Example}
Let $g_\EC$ be the modular form associated  to the elliptic curve $\EC$ with Cremona label \href{https://www.lmfdb.org/EllipticCurve/Q/43/a/1}{43a1}.
Fix $p=11$; this is a prime of good ordinary reduction of $\EC$ with $\mu(g_E)=0$.

We have that $g$ satisfies \ref{assmpn: optimal}, and since $43$ is prime, $g$ is of optimal level in $\mathcal{H}(g)$.

The prime $\ell=43$ is a prime of non-split multiplicative reduction for $\EC$, so $a_{43}(g)=-1$.
Since $43 \equiv -1 \pmod{11}$, it follows from Lemma~\ref{lem:local-split-nonsplit} that $d_{43}(\EC) = 1$.

Hence Theorem~\ref{thm: reformulation of lambda positive} holds in this case, and the density of $\mathcal{R}$ is $\frac{11}{1200}\sim 0.00917$.
\end{Example}

\begin{Example}
%\DK{We might need to change the example because $p>3$?}
The results of this paper can be combined with recent results regarding analytic rank parity; see e.g. \cite{Shekhar,Hat17,AAS,hatleylei2019}.
We illustrate this with a concrete example.

Let $g_\EC$ be the modular form associated to the elliptic curve $\EC$ with Cremona label \href{https://www.lmfdb.org/EllipticCurve/Q/53/a/1}{53a1}.
The analytic rank of $g_\EC$ is $1$.
Let $p=3$; then $\EC$ has good supersingular reduction at $p$, and one checks that \ref{assmpn: optimal} and \ref{assmpn: d=1 case} hold.
The LMFDB tells us $\mu^\pm(\EC)=0$.
Hence Theorem~\ref{thm: reformulation of lambda positive} holds in this case, and the density of $\mathcal{R}$ is $\frac{3}{16}=0.1875$.
We will now apply \cite[Corollary~5.8]{hatleylei2019}.
Let $f \in \mathcal{H}(g)$ have level $M$, where $M$ is formed entirely from the set of primes guaranteed by Theorem~\ref{thm: reformulation of lambda positive}.
Denote by $\mathfrak{m}$ the number of primes $\ell$ such that $\ell \mid \frac{M}{N}$.
Note that $\delta(f,53)$ is either $0$ or $1$, depending on the value of $a_{53}(f)$.

In the notation of \cite[Corollary~5.8]{hatleylei2019}, we then have $r_{\mathrm{an}}(g)=1$ and $|S_g|=1$, while $|S_f|= \mathfrak{m} + \delta(f,53)$.
So by \cite[Corollary~5.8]{hatleylei2019}, we see that the analytic rank of $f$ has the same parity as the quantity $\mathfrak{m} + \delta(f,53)$.
\end{Example}

\begin{Remark}
We make one final remark regarding the modular forms $f$ which are under study in this paper.
By definition, the modular forms in $\mathcal{H}(g)$ are of weight $2$, and we can determine their levels using \ref{Thm:Carayol}.

However, there also exist modular forms $f$ of higher weight $k \equiv 2 \mod (p-1)$ which satisfy $\bar{\rho}_f \simeq \bar{\rho}_g$.
If $g$ is $p$-ordinary, then the same is true for $f$, and in fact $f$ lives in a \textit{Hida family} of modular forms with residual representations isomorphic to $\bar{\rho}_g$; by \cite[Corollary 2]{epw}, all of these modular forms have the same $\lambda$-invariant (assuming \ref{assmpn: mu}).
In particular, this branch of the Hida family contains a weight $2$ modular form which belongs to $\mathcal{H}(g)$, and since its $\lambda$-invariant is the same as $f$, we may propagate our results to higher-weight modular forms in this way.

\end{Remark}

\section{Declarations}

The authors have no relevant financial or non-financial interests to disclose.

\section{Data Availability}

Data sharing is not applicable to this article as no datasets were generated or analysed during the current study.

\bibliographystyle{amsalpha}
\bibliography{references}

\end{document}